\numberwithin{equation}{section}
\theoremstyle{plain}
\newtheorem{theorem}{Theorem}[section]
\newtheorem{lemma}[theorem]{Lemma}
\newtheorem{proposition}[theorem]{Proposition}
\theoremstyle{definition}
\newtheorem{definition}[theorem]{Definition}
\newtheorem{notation}[theorem]{Notation}
\newtheorem{remark}[theorem]{Remark}
\newtheorem{example}[theorem]{Example}
\DeclareMathOperator{\lcm}{lcm}
\DeclareMathOperator{\Indec}{Ind}
\DeclareMathOperator{\First}{First}
\newcommand{\Ini}{\mathfrak{I}}
\newcommand{\Full}{\mathfrak{F}}
\newcommand{\langleD}{\langle\!\langle}
\newcommand{\rangleD}{\rangle\!\rangle}
\newcommand{\g}{\mathbf{g}}
\newcommand{\I}{\mathcal{I}}
\newcommand{\Nat}{\mathbb{N}}
\newcommand{\M}{\mathbb{M}}
\newcommand{\Tree}{\mathbf{Tree}}
\newcommand{\Lan}{\mathbf{Lan}}
\newcommand{\Fin}{\mathbf{Fin}}
\newcommand{\Seq}{\mathbf{Seq}}
\newcommand{\Per}{\mathbf{Per}}
\newcommand{\E}{\mathbb{E}}
\newcommand{\G}{\mathcal{G}}
\newcommand{\primerexemple}{{\fontsize{2.5}{4}\selectfont
\begin{forest}
 for tree={circle,draw,l sep=1pt, s sep=10pt}
[ [ [][]  ][ [][ [][] ] ]]
\end{forest}
}}
\newcommand{\dos}{{\fontsize{2.5}{4}\selectfont
\begin{forest}
 for tree={circle,draw,l sep=1pt, s sep=10pt}
[ [][] ]
\end{forest}
}}
\newcommand{\trespositiu}{{\fontsize{2.5}{4}\selectfont
\begin{forest}
 for tree={circle,draw,l sep=1pt, s sep=10pt}
[ [][  [] [ ] ] ]
\end{forest}
}}
\newcommand{\producte}{{\fontsize{2.5}{4}\selectfont
\begin{forest}
 for tree={circle,draw,l sep=1pt, s sep=10pt}
[ [ [] []]   [  [[][]] [[][]] ]  ]
\end{forest}
}}
\begin{document}


\title[Equidecomposable magmas]{Equidecomposable magmas}

\author[C. Card\'o]{Carles Card\'o}
\address{Departament de Ci\`encies de la Computaci\'o, \\Campus Nord, Edifici Omega, Jordi Girona Salgado 1-3. 08034  \\Universitat Polit\`ecnica de Catalunya \\ Barcelona, \\Catalonia (Spain)}
\urladdr{http://www.cs.upc.edu}
\email{cardocarles@gmail.com}

\thanks{This research was supported the recognition 2017SGR-856 (MACDA) from AGAUR (Generalitat de Catalunya).}


\subjclass{08A02, 08A30, 08B20, 08C15, 20N99}

\keywords{Equidecomposable magma, Free magma, Initial magma, Full magma, Isomorphism of magmas, J\'onsson-Tarski algebra}

\begin{abstract} A magma is called equidecomposable when the operation is injective, or, in other words, if $x+y=x'+y'$ implies that $x=x'$ and $y=y'$. 
A magma is free iff it is equidecomposable and graded, hence the notion of equidecomposability is very related to the notion of freeness although it is not sufficient. We study main properties of such magmas. In particular, an alternative characterization of freeness, which uses a weaker condition, is proved. We show how equidecomposable magmas can be split into two disjoint submagmas, one of which is free.   
Certain tranformations on finite presentations permit to obtain a reduced form which allows us identify all the finite presented equidecomposable magmas up to isomorphisms. 
\end{abstract}

\maketitle


\section {Introduction}
A magma is \emph{equidecomposable} when given two decompositions of an element, $x+y$ and $x'+y'$, we have that they are the same, $x=x'$ and $y=y'$. A magma is free iff it is equidecomposable and graded, \cite{cardo2019arithmetic}. 
Hence,  the notion of equidecomposability is very related to the notion freeness although it is not sufficient. Since this condition is so strong, one could wonder if the totality of such magmas can be identified. We study the main properties of equidecomposable magmas, show characterizations, and finally, we classify up to isomorphisms all the finitely presented equidecomposable magmas. Thus, this article proportions a good perspective of this kind of structures. 

We review some preliminary and necessary concepts in Section~\ref{Preliminaries}, which were already proved in \cite{cardo2019arithmetic}, with some extra result. The article is almost self-contained, with the exception of the proofs of a few lemmas. 
  
In Section~\ref{EquidecMagmas} we define and see first examples of equidecomposable magmas. Section~\ref{GradSemiIniFree} reviews freeness. In particular we obtain an alternative characterization of freeness, by weakening the condition of being graded. A magma is \emph{initial} iff it can be generated entirely by its indecomposable elements. Then, a magma is free if and only if is equidecomposable and initial. 

Another important kind of magmas is that of \emph{full} magmas. A magma is full iff all its elements are decomposable. We show in Section~\ref{FullMagmas} that any equidecomposable magma can be split into two submagmas, the largest full submagma and the initial submagma, as:
\begin{align*}
M = \Ini (M) \sqcup \Full (M),
\end{align*} 
where $\Ini(M)$ and $\Full(M)$ denote respectively the initial part and the largest full magma and $\sqcup$ is the disjoint union of sets.  
  
In section ~\ref{ClosedSubMagmas}, we characterize equidecomposable magmas as a quotient of a free magma by a closed congruence. This allows us to study finite presentations, Section~\ref{Presentations}, and we show how to obtain a fixed form for finite presentations of an equidecomposable magma. More specifically, for each finitely presented equidecomposable magma $M$ there is an injective mapping $\varphi: A \longrightarrow \M_A$, with some additional conditions, such that $M \cong \langle A \mid \varphi \rangle$. We write $\E(\varphi)$ these magmas. 
Using this form we identify up to isomorphisms all the finitely presented equidecomposable magmas. 
We say that two mappings $\varphi: A \longrightarrow \M_A$ and $\psi: B \longrightarrow \M_B$ are \emph{conjugated} when they are the same, up to renaming the generators $A$ and $B$. We will prove in the last Section~\ref{ISoCharac} that two magmas $\E(\varphi)$ and $\E(\psi)$ are isomorphic iff $\varphi$ and $\psi$ are conjugated.

\section {Notation and preliminaries on magmas} \label{Preliminaries}

A \emph{magma} is a set $M$ with an operation $+:M\times M \longrightarrow M$. A \emph{submagma} of a magma $M$ is a subset of $M$ that is stable for the operation. For convenience, we accept the empty set as a magma.
An element $z \in M$ is \emph{decomposable} iff there are $x,y \in \M$ such that $z=x+y$, otherwise it is said \emph{indecomposable}.
We denote $\I=\{0\}$ the \emph{trivial} magma with only one element.  
  
Notions of \emph{lattice, semi-lattice, direct product, homomorphism, isomorphism, monomorphism}, and \emph{epimorphism} of magmas are defined as is usual. 
 See, for example, \cite{Bourbaki1989Algebra, sankappanavar1981course, gratzer2008universal, Rosenfeld1968Algebraic, davey2002introduction} for algebraic elementary concepts. 

\begin{definition} Given a non-empty subset $X$ of a submagma $M$, $\langle X \rangle$ denotes the least submagma containing $X$, that is:
\begin{align*}
\langle X \rangle = \bigcap_{ X \subseteq N} N.
\end{align*}
We say that $X$ \emph{generates} $\langle X \rangle$. Given two submagmas $N,N' \subseteq M$, we write $N \vee N'=\langle N \cup N' \rangle$. 

We also define $\langleD X \rangleD=\bigcup_{k\geq 1} X_k$ where $X_k$ is the sequence: 
\begin{align*}
X_1 &=X;\\
X_k &= \bigcup_{\stackrel{i,j\geq 1}{i+j=k}} X_i  + X_j, \,\, \mbox{ for } k>1.
\end{align*} 
\end{definition}

\begin{lemma} \label{GeneratorSets} Given a non-empty subset $X$ of a submagma $M$, $\langle X \rangle = \langleD X \rangleD$. 
\end{lemma}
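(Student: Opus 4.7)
The plan is to prove the two inclusions $\langleD X \rangleD \subseteq \langle X \rangle$ and $\langle X \rangle \subseteq \langleD X \rangleD$ separately, using the fact that $\langle X \rangle$ is characterized by a universal (smallest submagma) property and that $\langleD X \rangleD$ is defined constructively by the layers $X_k$.

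First I would show $\langleD X \rangleD \subseteq \langle X \rangle$ by induction on $k$, proving that each $X_k \subseteq \langle X \rangle$. The base case $k=1$ is immediate since $X_1 = X \subseteq \langle X \rangle$. For the induction step, any element of $X_k$ has the form $u+v$ with $u \in X_i$ and $v \in X_j$ for some $i,j \geq 1$ with $i+j = k$; by the induction hypothesis $u, v \in \langle X \rangle$, and since $\langle X \rangle$ is a submagma, $u+v \in \langle X \rangle$. Taking the union over $k \geq 1$ gives the inclusion.

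For the reverse inclusion $\langle X \rangle \subseteq \langleD X \rangleD$, I would verify that $\langleD X \rangleD$ is itself a submagma of $M$ containing $X$; by the intersection characterization of $\langle X \rangle$, this forces $\langle X \rangle \subseteq \langleD X \rangleD$. Containment of $X$ is immediate from $X_1 = X$. Stability under $+$ is the key point: if $u,v \in \langleD X \rangleD$, choose $i,j \geq 1$ minimal with $u \in X_i$ and $v \in X_j$; then by the definition of $X_{i+j}$ we have $u+v \in X_i + X_j \subseteq X_{i+j} \subseteq \langleD X \rangleD$.

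I do not expect any real obstacle here: the statement is the standard identification of the smallest submagma generated by $X$ with the set built up iteratively from $X$ using the operation, and both inclusions reduce to a short induction and an unwinding of the definitions. The only subtle point is to be careful that the indices $i,j$ in the recursion for $X_k$ are at least $1$ (so that $X_i$ and $X_j$ have already been constructed), which is ensured by the requirement $i+j=k$ with $i,j\geq 1$ in the definition.
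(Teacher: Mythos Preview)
Your argument is correct and is exactly the standard two-inclusion proof of this elementary fact. The paper itself does not give a proof but simply refers to \cite[p.~33]{sankappanavar1981course}, where the same construction-versus-intersection argument appears; one minor remark is that in your closure step the minimality of $i,j$ is unnecessary, since any choice of indices with $u\in X_i$ and $v\in X_j$ already gives $u+v\in X_{i+j}$.
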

\begin{proof} See \cite[p.~33]{sankappanavar1981course}. 
\end{proof}

\begin{definition} \label{DefFreeMag} $\sqcup$ denotes de disjoint union of sets. Given a non-empty set $A$ consider the sequence of sets:
\begin{align*}
X_1 &=A;\\
X_k &= \bigsqcup_{\stackrel{i,j\geq 1}{i+j=k}} X_i  \times X_j, \,\, \mbox{ for } k>1.
\end{align*}
The \emph{free magma on} $A$ is the magma $(\M_A;+)$, where $\M_A=\bigcup_{k\geq 1} X_k$ and $x+y=(x,y)$ for each $x,y\in \M_A$. In general, we say that a magma $M$ is \emph{free} iff there is some set $A$ such that $M\cong \M_A$. The \emph{length} of an element $x\in \M_A$ is defined as $\ell(x)=k$ where $x\in X_k$. Indeed, the length is an epimorphism of magmas, $\ell:\M_A \longrightarrow \Nat$ with $\ell(x+y)=\ell(x)+\ell(y)$, for any $x,y \in \M_A$. 
\end{definition}

\begin{remark} The set $A$ is the unique minimal generator set of $\M_A=\langle A \rangle$. We have adopted the definition of Bourbaki \cite[p.~81]{Bourbaki1989Algebra} which is equivalent to define freeness by satisfying the \emph{universal mapping property}, see \cite[p.~71]{sankappanavar1981course}. 
\end{remark}

\begin{notation} \label{NotationCardo} Following the same notation in \cite{cardo2019arithmetic}, the cyclic free magma is written $\M$ and we take as a generator the symbol $1$. Thus, $\M=\M_{\{1\}}$ and its elements are of the form $1,(1+1), 1+(1+1), (1+1)+1, \ldots$. 
We use the following abbreviations, $2=1+1$, and:
\begin{align*} 
&1_-=1, \,\,\,\,\,\,\,\,\, (n+1)_-=n_-+1,\\
&1_+=1, \,\,\,\,\,\,\,\,\, (n+1)_+=1+n_+,
\end{align*}
for any integer $n\geq 0$.
\end{notation}

A \emph{polynomial} over a magma $M$ is a mapping $P:M^n \longrightarrow M$, defined by the substitution of the \emph{variables} $x_1, \ldots, x_n$ of a term $P \in \M_{\{x_1, \ldots, x_n\}}$ by elements of $M$. Some example of polynomials are $P(x)=x+(x+x)$ or $P(x,y,z)=(x+y)+z$. More formally:

\begin{definition} Let $M$ be a magma and let $P \in \M_Z$, where $Z$ is a finite set called the \emph{variables}. If $\ell(P)=1$,  the \emph{polynomial over $M$} is defined as the \emph{identity mapping} $P: M \longrightarrow M$, $P(z)=z$. If $\ell(P)>1$, then $P$ can be decomposed as $P=Q+R$, where $Q \in \M_Y$, $R \in \M_Z$, and $X\cup Y \subseteq Z$. Let $X=\{x_1, \ldots, x_r\}$, $Y=\{y_1, \ldots, y_s\}$, and $\{z_1, \ldots, z_n\}=X\cup Y$. 
The \emph{polyomial over $M$} is the mapping $P:M^n \longrightarrow M$, defined recursively as:
\begin{align*}
P(z_1, \ldots, z_n)=Q(x_1, \ldots, x_r)+R(y_1, \ldots, y_s).
\end{align*}
\end{definition} 

In a free magma each submagma has a unique minimal generator set which can be calculated as the union of the necessary generators to generate individually each element of the submagma.  

\begin{definition} \label{DefMinGen} Given  a non-empty submagma $N\subseteq \M_A$, consider the mapping $g_N: N \longrightarrow 2^N$ defined recursively as:
\begin{align*}
g_N(z)=\begin{cases} g_N(x) \cup g_N(y) &\mbox{ if } z=x+y \mbox{ for some } x,y\in N; \\
\{z\} & \mbox{ otherwise. }\end{cases} 
\end{align*}
We define $\G(N)=\bigcup_{x\in N} g_N(x)$ when $N$ is non-empty, and $\G(\emptyset)=\emptyset$.
\end{definition}

\begin{lemma}  Given a non-empty submagma $N \subseteq \M_A$, $\G(N)$ is the unique minimal generating set of $N$. We have that if $N$ is a submagma of $\M_A$, then $\langle \G(N) \rangle =N$; and if $X$ is a subset of $\M_A$, then $\G(\langle X \rangle) \subseteq X$. In particular, $\langle \cdot \rangle$ and $\G$ form a Galois connection.
\end{lemma}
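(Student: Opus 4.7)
The key observation underlying the whole statement is that $\M_A$, being free, is equidecomposable: every $z \in \M_A$ with $\ell(z) > 1$ admits a unique decomposition $z = x + y$ in $\M_A$ with $\ell(x), \ell(y) < \ell(z)$. This makes the recursive definition of $g_N$ unambiguous (one inspects the unique $\M_A$-components of $z$ and asks whether both lie in $N$) and guarantees termination via strict length decrease, so $\G(N)$ is well defined. All subsequent arguments will proceed by strong induction on $\ell$ and by exploiting the characterisation $\langle X \rangle = \langleD X \rangleD$ from Lemma~\ref{GeneratorSets}.

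The plan is to first establish $\langle \G(N) \rangle = N$ by showing both inclusions. The inclusion $\langle \G(N) \rangle \subseteq N$ is immediate, since $g_N(z) \subseteq N$ by construction forces $\G(N) \subseteq N$, and $N$ is a submagma. For the reverse, I would induct on $\ell(z)$ for $z \in N$: if $g_N(z) = \{z\}$, then $z \in \G(N)$ and we are done; otherwise $z = x + y$ with $x, y \in N$ of strictly smaller length, and the induction hypothesis places $x, y$ in $\langle \G(N) \rangle$, so $z = x + y$ does too.

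Next I would prove the minimality and uniqueness of $\G(N)$ among generating sets simultaneously. Let $X$ be any generating set of $N$; by Lemma~\ref{GeneratorSets}, $N = \bigcup_k X_k$. If $g \in \G(N)$, then by definition $g$ does not decompose as $u + v$ with $u, v \in N$, so $g$ cannot lie in $X_k$ for any $k > 1$ (membership there would express $g = u + v$ with $u \in X_i \subseteq N$ and $v \in X_j \subseteq N$). Hence $g \in X_1 = X$, giving $\G(N) \subseteq X$. Combined with the fact that $\G(N)$ itself generates $N$, this immediately yields both minimality and uniqueness of the minimal generating set.

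The inclusion $\G(\langle X \rangle) \subseteq X$ is then just the previous paragraph applied to the submagma $N = \langle X \rangle$, of which $X$ is by construction a generating set. The Galois connection statement follows formally from the two identities $\langle \G(N) \rangle = N$ on submagmas and $\G(\langle X \rangle) \subseteq X$ on subsets, which together say that $\langle \cdot \rangle$ and $\G$ form an adjoint pair in the appropriate sense. I do not foresee a significant obstacle: the only point requiring real care is the well-definedness of $g_N$, which relies on the unique decomposition in the free magma $\M_A$; the remaining steps are routine induction on length together with one application of Lemma~\ref{GeneratorSets}.
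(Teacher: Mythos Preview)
The paper does not actually prove this lemma here; it simply cites \cite{cardo2019arithmetic}. Your self-contained argument is correct for the substantive claims and follows the natural route (induction on $\ell$ together with Lemma~\ref{GeneratorSets}).

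Two small points of care. First, the assertion that ``by definition $g$ does not decompose as $u+v$ with $u,v\in N$'' for $g\in\G(N)$ is not literally the definition, since $\G(N)=\bigcup_{z\in N}g_N(z)$; it requires a one-line induction on $\ell(z)$ showing that every element of $g_N(z)$ itself satisfies $g_N(g)=\{g\}$. This is routine and your sketch implicitly assumes it.

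Second, the Galois-connection clause deserves more than ``follows formally''. The map $\G$ is \emph{not} monotone on submagmas: for instance in $\M$ one has $\langle 2\rangle\subseteq\M$ while $\G(\langle 2\rangle)=\{2\}\not\subseteq\{1\}=\G(\M)$. Hence $(\langle\cdot\rangle,\G)$ is not a Galois connection in the standard order-theoretic sense between subsets and submagmas. The two identities you establish, $\langle\G(N)\rangle=N$ and $\G(\langle X\rangle)\subseteq X$, are exactly what the lemma asserts and are the content one actually uses later; the phrase ``Galois connection'' is presumably being used in the looser sense of the cited paper. You may simply record the two identities and drop the formal adjunction language, or consult the reference for the intended formulation.
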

\begin{proof} See \cite{cardo2019arithmetic}.
\end{proof}

\begin{lemma} \label{SubmagmaCondition} Given a non-empty submagma $N \subseteq \M_A$ we have that $x+y \in N$ iff either $x,y\in N$ or $x+y\in \G(N)$.
\end{lemma}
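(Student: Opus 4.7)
The plan is to prove both implications directly, with the forward direction relying on the fact that the free magma $\M_A$ has an injective operation (this is the central, if tacit, role of equidecomposability here).

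First I would dispatch the easy direction ($\Leftarrow$). If $x,y \in N$, then $x+y \in N$ because $N$ is a submagma. If instead $x+y \in \G(N)$, I would note that $\G(N) \subseteq N$: a straightforward induction on $x \in N$ using the recursion of Definition~\ref{DefMinGen} shows that $g_N(x) \subseteq N$ for every $x \in N$, and therefore $\G(N) = \bigcup_{x\in N} g_N(x) \subseteq N$. Hence $x+y \in N$ either way.

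For the forward direction ($\Rightarrow$), I would fix $x+y \in N$ and inspect which branch of the recursive definition of $g_N(x+y)$ applies. By definition, there are exactly two possibilities: either there exist $x', y' \in N$ with $x+y = x'+y'$, or no such pair exists. In the first case the key observation is that the operation in $\M_A$ is injective (i.e.\ the free magma is equidecomposable), so $x+y = x'+y'$ forces $x = x'$ and $y = y'$, giving $x, y \in N$. In the second case, the recursion gives $g_N(x+y) = \{x+y\}$, which by definition of $\G(N)$ means $x+y \in \G(N)$.

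There is no serious obstacle; the only subtlety is recognising that the free magma's decomposition uniqueness is what makes the dichotomy in the definition of $g_N$ mutually exclusive and hence what makes the second branch actually put $x+y$ into $\G(N)$. The argument also implicitly relies on the fact that $\G(N) \subseteq N$, which must be established (as above) before it can be used in the converse.
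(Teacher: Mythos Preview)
Your argument is correct. The paper does not give its own proof of this lemma but simply defers to the earlier article \cite{cardo2019arithmetic}, so there is nothing to compare against; your direct argument via the two branches of the recursion for $g_N$, together with the injectivity of the operation in $\M_A$, is exactly the natural proof one would expect. The only cosmetic point is that ``induction on $x\in N$'' should be stated as induction on the length $\ell(z)$ in $\M_A$, which is what makes the recursion for $g_N$ terminate and the induction well-founded.
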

\begin{proof} See \cite{cardo2019arithmetic}.
\end{proof}

\begin{definition} Given a magma $M$, a set $X \subseteq M$ is said \emph{closed} iff $x+y \in X$ implies that $x,y \in X$. 
\end{definition}

The intersection and the union of closed sets are closed. Hence, closed sets of a magma $M$ form a complete lattice with bounds $\emptyset$ and $M$. The complement of a closed set is a submagma. Here we are interested in the case that closed sets are in addition submagmas. See \cite{cardo2019arithmetic, Reutenauer1993BookFreeLie} for some questions on closed sets

\section {Equidecomposable magmas} 
\label{EquidecMagmas}

\begin{definition} We say that a magma $M$ is \emph{equidecomposable} iff for each $x,y,x',y' \in M$ we have:
\begin{align*}
x+y=x'+y' \implies x=x', y=y'.
\end{align*} 
Equivalently, $M$ is equidecomposable iff the operation $+: M\times M \longrightarrow M$ is injective. 
We will use the abbreviation \emph{equidec} for equidecomposable magmas.
\end{definition}

\begin{example} \label{ExampTrivial} The trivial magma $\I=\{0\}$ is trivially equidec since the equation $x+y=x'+y'$ has the unique solution $x=y=x'=y'=0$. 
Free magmas $\M_A$  are also equidec. 

Since $|M\times M|=|M|^2>|M|$, provided $|M|>1$, by the pigeonhole principle, the trivial magma is the unique equidec finite magma.

The equidecomposability condition is so strong that there are not commutative, associative, with-neutral-element, nor medial equidec magmas, excepting the trivial magma. Let $M$ be an equidec magma. 
If $M$ is commutative, then for all $x,y \in M$:
\begin{align*}
x+y=y+x \implies x=y  \implies M\cong \I.
\end{align*}
If $M$ is associative, then for all $x,y \in M$:
\begin{align*}
&x+(y+x)=(x+y)+x \\
\implies & x=x+y \mbox{ and } y+x=x \\
 \implies & x+y=y+x.
\end{align*}
Therefore, $M$ is commutative, and then $M\cong \I$.
The cases of magmas with neutral element and medial magmas are similar (see \cite{Jezek1983Medial} for medial magmas). 
\end{example}

\begin{example} We see some constructions of equidec magmas. Since submagmas inherit the operation, if $N$ is a submagma of $M$ and $M$ is equidec, then $N$ is equidec. 
It is also trivial to check that the direct product of two equidec magma is an equidec magma. However the direct product is not in general finitely generated. Consider, for example, the free magma $\M_A \times \M_B$ which needs infinite generators; see later Theorem~\ref{TeoProduct}. We will see that the free product of two finitely presented equidec magmas yields an equidec magma; see later Example~\ref{PropositionProducts}. 

The length $\ell:\M \longrightarrow \Nat$ is an epimorphism of magmas. However, $(\Nat, +)$ is not equidec since it is commutative and associative, see Example~\ref{ExampTrivial}, which shows that the class of equidec magmas is not closed under image of homomorphisms. Equidec magmas do not form a variety but a quasivariety, see \cite[p.~250]{sankappanavar1981course}.
\end{example}

\begin{example} Let us write $\Tree$ the magma of planar binary trees with finite or infinite depth. Given two trees $x,y \in \Tree$, we consider the tree $x+y$ as  the result of join $x$ and $y$ by a new root. Then, $(\Tree, +)$ is an equidec magma.
$\Tree$ is an extension of the cyclic free magma. There is a monomorphism $\Phi: \M \longrightarrow \Tree$, defined by $\Phi(1)=\circ$, where $\circ$ is the tree with only one vertex. For example:
\begin{align*} \Phi \Big( (1+1)+(1+(1+1)) \Big) = \primerexemple \, .\end{align*}
$\Tree_\infty$ denotes the set of infinite trees, and $\Tree_{<\infty}$, the set of finite trees. Both are submagmas and:
\begin{align*}
\Tree_{<\infty}=\Phi(\M), \,\,\,\,\,\, \Tree_\infty=\Tree \setminus \Phi(\M).
\end{align*}
\end{example}

\begin{proposition} \label{Equation} Let $M$ be an equidec magma, $P$ a polynomial over $M$ with $n$ variables, and $k \in M$. The equation:
\begin{align*}
P(x_1,\ldots, x_n)=k,
\end{align*}
has at most one solution. 
\end{proposition}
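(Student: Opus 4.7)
The plan is to proceed by induction on the length $\ell(P)$ of $P$ viewed as an element of the free magma over its variable set $Z=\{z_1,\ldots,z_n\}$.

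For the base case $\ell(P)=1$, the polynomial is the identity mapping on $M$, so the equation $P(z)=z=k$ has the unique solution $z=k$.

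For the inductive step, assume $\ell(P)>1$ and decompose $P=Q+R$ with $\ell(Q),\ell(R)<\ell(P)$, where the variable sets $X$ of $Q$ and $Y$ of $R$ satisfy $X\cup Y=Z$. Suppose $(a_1,\ldots,a_n)$ and $(b_1,\ldots,b_n)$ are two solutions of $P(x_1,\ldots,x_n)=k$. Unfolding the recursive definition of polynomial evaluation,
\begin{align*}
Q(a)+R(a)=k=Q(b)+R(b),
\end{align*}
where $Q(a)$ abbreviates $Q$ evaluated on the entries of the tuple indexed by $X$, and similarly for the other occurrences. Since $M$ is equidecomposable, the operation $+$ is injective, hence
\begin{align*}
Q(a)=Q(b) \quad\text{and}\quad R(a)=R(b).
\end{align*}
Now the inductive hypothesis applied to $Q$ with right-hand side $Q(a)\in M$ yields $a_i=b_i$ for every index $i$ with $z_i\in X$, and the inductive hypothesis applied to $R$ yields $a_i=b_i$ for every index $i$ with $z_i\in Y$. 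Because $X\cup Y=Z$ covers all variables of $P$, we conclude $a_i=b_i$ for all $i$, establishing uniqueness.

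The argument is essentially a direct unwinding of the recursive definition of polynomial combined with the injectivity of $+$; no genuine obstacle is expected. The only point requiring care is to observe that the variables of $P$ are exhausted by $X\cup Y$ at each recursive step, so no variable is left free when both induction hypotheses are applied.
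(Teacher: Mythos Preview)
Your proof is correct and follows essentially the same approach as the paper: induction on $\ell(P)$, decomposing $P=Q+R$ in the inductive step, and invoking injectivity of $+$ to reduce to the shorter polynomials. The only cosmetic difference is that the paper first decomposes $k=k_1+k_2$ (handling separately the case where $k$ is indecomposable, hence no solution) and then argues that the sub-equations $Q=k_1$, $R=k_2$ each have at most one solution, whereas you assume two solutions from the outset and compare them directly; both framings are standard and equivalent.
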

\begin{proof}
By induction on the length of the polynomial. If $\ell(P)=1$, then $n=1$, and $P(x)=x$. The equation has trivially one solution, $x=k$. 
We assume that the statement is true for polynomials with length less or equal to $m$, with $m>1$ and let $P$ be a polynomial with $\ell(P)=m+1$. We suppose that $k$ can be decomposed as $k=k_1+k_2$, otherwise the equation has no solution and we are done. Since $\ell(P)>1$, we can decompose $P$ as $P(x)=P_1(x)+P_2(x)$, where $x=(x_1,\ldots, x_n)$. Since $M$ is equidec:
\begin{align*}
P(x)=k \implies P_1(x)+P_2(x)=k_1+k_2 \implies P_1(x)=k_1, P_2(x)=k_2.
\end{align*}
Equations $P_i(x)=k_i$, $i=1,2$ have length less than $m+1$, and by hypothesis of induction these equations has at most one solution. If both equations $P_i(x)=k_i$, $i=1,2$ have solution, the equation $P(x)=k$ has a unique solution; if one of them does not have solution, the equation does not have solution.
\end{proof}

\begin{example} Let $P(x,y)$ be a polynomial over the equidec magma $(M;+)$. The magma $(M; \odot)$ with $x\odot y =P(x,y)$ is equidec.  
This is a straightforward consequence of Proposition~\ref{Equation}. 
\end{example}

\begin{example} \label{ExampLan} Let $\Sigma=\{\alpha, \beta\}$ be an \emph{alphabet} and let $(\Sigma^*; \cdot, \varepsilon)$ be the free monoid on $\Sigma$. We write $\Lan=2^{\Sigma^*}$ the set of \emph{languages} on the alphabet $\Sigma$. See \cite{hopcroft1979introduction} for terminology on formal languages. We adopt the usual convention of notating singleton languages $\{x\}$ as $x$. 

Given, $L,L'\in \Lan$, consider the operation:
\begin{align*}
L \oplus L'=\alpha L \cup \beta L',
\end{align*}
and let us see that $(\Lan; \oplus)$ is a non-countable equidec magma. First we see an elementary result on sets. Let $X,Y,Z,T$ be sets such that:
\begin{align*}
X\cap Y=X\cap T=Z \cap Y=Z \cap T=\emptyset.
\end{align*}
We have that:
\begin{align*}
X\cup Y =Z\cup T \implies X=Z, Y=T.
\end{align*}
Let us see a quick proof: 
\begin{align*}
X=X\cap(X\cup Y)=X \cap (Z \cup T)=(X\cap Z)\cup \emptyset=X \cap Z,
\end{align*}
whence $X\subseteq Z$. By symmetry we get $X=Z$. Similarly we get $Y=Z$. 
Now we can see that the operation $\oplus$ is equidec. We suppose: $L \oplus L'=H \oplus H'$, for some $L,L',H,H' \in \Lan$. This means: $ \alpha L \cup \beta L'=\alpha H \cup \beta H'$. Clearly $\alpha L\cap \beta H'=\beta L' \cap \alpha H=\emptyset$ and $\alpha L \cap \beta L'= \alpha H \cap \beta H'=\emptyset$. Then, applying the above result, $\alpha L =\alpha H$ and $\beta L' = \beta H'$. Therefore, $L=H$ and $L'=H'$. 

Let us see some submagmas in $\Lan$. A language $L$ is \emph{prefix free}, when $xy \in L$ implies that $x \not \in L$. The set of prefix free languages is a submagma. 
Many classical families of languages, such as finite languages, \emph{regular} languages, \emph{context-free} languages, \emph{context-sensitive} languages, \emph{recursive} languages, amongst others, are submagmas. For example, the family of regular languages is closed, amongst other operations, under products and unions; see \cite{hopcroft1979introduction}. Thus, if $L,L'$ are regular, then $\alpha L\cup \beta L'$ is. 

There is a copy of $\Tree$ in $\Lan$. Given a binary tree we label left branches with $\alpha$, and right branches, with $\beta$. The path from the root to a vertex is a word in $\Sigma^*$. We define the monomorphism $\Psi:\Tree \longrightarrow \Lan$ where $\Psi(x)$ is the set of all the word paths of the leaves of the tree $x$. We have indeed:
\begin{align*}
\M \cong \Psi(\Tree_{<\infty})=\langle \varepsilon \rangle.
\end{align*}
\end{example}

\begin{example} A \emph{left semiringoid} structure $(M; +, \cdot, 1)$ is a magma $(M;+)$ such that, for any $x,y,z \in M$:
\begin{align*}
& x\cdot 1=1 \cdot x=x,\\
& (x\cdot y)\cdot z=x \cdot (y \cdot z),\\
& x\cdot (y+z)=x\cdot y + x \cdot z.
\end{align*}
A \emph{right semiringoid} is defined symmetrically. 
Given $a\in M$, the set $a \cdot M$ is a submagma of $M$, which we call \emph{principal} ($M\cdot a$ for the case of right semiringoids). 
A homomorphism of semiringoids preserves the sum, the unity and the product. An anti-homomorphism preserves the sum, the unity, and reverses the product order. 

The free magma $(\M; +, \cdot,1)$ is a left semiringoid, where the product $x\cdot y$ consists in substituting each $1$ of $y$ by $x$. See \cite{blondel1995UneFamille, blondel1994Properties} for operations defined over a free magma. In \cite{cardo2019arithmetic} we studied the arithmetical properties of this structure. Every cyclic submagma of $\M$ is principal, $\langle a \rangle= a\M$. 

The product of languages is distributive with $\oplus$:
\begin{align*}
(L\oplus L') S=(\alpha L \cup \beta L')S= \alpha LS \cup \beta L'S=L S \oplus L' S.\end{align*}
In addition: $ L \cdot \varepsilon=\varepsilon \cdot L=L$. Thus, $(\Lan; \oplus, \cdot, \varepsilon)$ is a right semiringoid. 

Let us compare the semiringoids $\Lan$ and $\M$. 
We have that $\langle \varepsilon \rangle$ and $\langle 1 \rangle=\M$ are anti-isomorphic ringoids. That is, both are the same up to the order of the factors in the product operation. However, $\Lan$ is much complex magma. Cyclic submagmas of $\Lan$ are not principal. For example:
$\langle \varepsilon \rangle \subsetneq \Lan \cdot \varepsilon=\Lan$.
Finally, $\M$ with the product, $(\M; \cdot, 1)$, is a non-finitely generated free monoid, while $(\Lan; \cdot, \varepsilon)$ has, for instance, the non-trivial relation $\beta^* \cdot \beta^* =\beta^*$.

The magma $\Tree$ can also enriched to obtain a ringoid. We define the product of two trees $x \bullet y$ as the result of hanging the tree $x$ from each leave in $y$.  For instance:
\begin{align*} \Big(\dos \Big) \,\,\, \bullet \,\,\, \Big( \,\trespositiu \Big)= \producte \, .\end{align*}
\end{example}

\section{Graded, semigraded, initial and free magmas}
\label{GradSemiIniFree}

Let $(\Nat;+)$ the set of positive integers with the ordinary sum. A \emph{gradation} of a magma $M$ is a homomorphism $\ell: M \longrightarrow \Nat$, so that $\ell(x+y)=\ell(x)+\ell(y)$. The length is a gradation for free magmas. 
In \cite{cardo2019arithmetic} we proved that a magma is free iff it is equidec and graded.

\begin{example} The condition of being equidec is necessary. For example, the commutative monoid $(\Nat; +)$ is graded with $\ell(x)=x$, but it is not a free magma. 

The condition of being graded is necessary. 
$\Tree$ is not free because we have, for example, the relation given by the infinite full binary tree $t$, $t=t+t$. This relation disallows to define a gradation since 
$\ell(t)=\ell(t)+\ell(t)$ implies that $\ell(t)=0$, which is impossible since gradations must be positive.
\end{example}

\begin{example} 
$\Lan$ is not free since, for example, we have the relation $\emptyset \oplus \emptyset=\emptyset$, or for example:
\begin{align*}
\varepsilon \oplus \beta^*\alpha=\alpha \cup \beta\beta^*\alpha=\beta^*\alpha.
\end{align*}
Consider the set of finite languages $\Fin \subset \Lan$. Notice that $\langle \varepsilon \rangle \subsetneq \Fin$. Since $\alpha L \cap \beta L'=\emptyset$, $|L\oplus L'|=|L|+|L'|$. Then we have the natural gradation $\ell(L)=|L|$, for which, in addition, $\ell(LL')=\ell(L)\ell(L')$.  
$\Fin$ is clearly submagma of $\Lan$. Since it is graded, it is free. However, it is not finitely generated. Singleton languages $L$ cannot be decomposed into other languages by $\oplus$ (otherwise, if $L=L' \oplus L''$, then $1=\ell(L)=\ell(L')+\ell(L'')>1$). This means that any singleton language must be a generator. Since we have infinite singleton languages $\Fin$ is not finitely generated. 
\end{example}

\begin{remark} \label{ReIndecomposable} We notice that for any generator set $A$ of a magma $M$, we have that $M\setminus(M+M) \subseteq A$. Thus, the set $M \setminus (M+M)$ is always involved to generate $M$.  
\end{remark}

\begin{definition} We define the \emph{initial part} of a magma $M$ as the submagma genererated by the indecomposable elements, or what is the same:
\begin{align*}
\Ini(M)=\langle M \setminus (M+M)\rangle.
\end{align*}
We say that $M$ is \emph{initial} when $\Ini(M)=M$. 
\end{definition}

The condition of being graded can be substituted by the weaker condition of being initial.

\begin{theorem} \label{TeoInitEqui} A magma is free iff it is initial and equidec. 
\end{theorem}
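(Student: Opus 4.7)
The plan is to reduce the statement to the theorem from \cite{cardo2019arithmetic} that is recalled just above the statement: a magma is free iff it is equidecomposable and graded. Hence it suffices to show that, under the standing assumption that $M$ is equidec, being initial is equivalent to admitting a gradation $\ell : M \to \Nat$.

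The forward direction is essentially definitional. If $M \cong \M_A$, the length of Definition~\ref{DefFreeMag} is a gradation, and since $\ell(x+y) \geq 2$ for every decomposable element of $\M_A$, the indecomposables coincide with $A$, so $\Ini(\M_A) = \langle A \rangle = M$.

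For the substantive direction, I would assume $M$ equidec and initial and set $A = M \setminus (M+M)$. By initiality and Lemma~\ref{GeneratorSets} one has $M = \langleD A \rangleD = \bigcup_{k\geq 1} A_k$, with $A_1 = A$ and $A_k = \bigcup_{i+j=k} A_i + A_j$ for $k>1$. I would then define $\ell(z)$ to be the least $k$ with $z \in A_k$ and prove, by induction on this $k$, both that it is the \emph{unique} such index and that $\ell(x+y) = \ell(x) + \ell(y)$ whenever $z = x+y$. The base case is clear: the elements of $A_1 = A$ are indecomposable, hence cannot lie in any $A_m$ with $m \geq 2$ (whose elements are all sums). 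For the inductive step, if $z \in A_k$ with $k \geq 2$, then $z = x+y$ with $x \in A_i$, $y \in A_j$, $i+j = k$; any competing witness $z \in A_{k'}$ would supply a second decomposition $z = x'+y'$, but equidec forces $x = x'$ and $y = y'$, after which the inductive hypothesis pins $i = i'$ and $j = j'$, so $k = k'$.

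The only genuine obstacle is precisely this disjointness of the cover $\{A_k\}_{k\geq 1}$; it is the place where equidecomposability is used decisively. Without it an element could inhabit several $A_k$ with conflicting total depth, and no gradation could be extracted from the generating construction. Once $\ell$ is well-defined it is manifestly positive-integer valued and additive, so $M$ is equidec and graded, and the cited theorem delivers that $M$ is free.
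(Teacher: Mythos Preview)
Your proof is correct and shares its core with the paper's: both establish that the layers $A_k$ are pairwise disjoint by using equidecomposability to descend from any hypothetical nontrivial intersection down to the base case $A_1 \cap A_m = \emptyset$. The only difference is in the wrap-up---you read off a gradation $\ell$ from the resulting partition and invoke the cited ``equidec $+$ graded $\Rightarrow$ free'' characterization, whereas the paper instead constructs the isomorphism $M \to \M_{A}$ directly from the partition; the substantive idea is the same.
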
 
\begin{proof} $(\Rightarrow)$ Trivial, since free magmas are initial. $(\Leftarrow)$ Let $M$ be a initial and equidec magma. We take the sequence:
\begin{align*}
X_1 &=M \setminus (M+M);\\
X_k &= \bigcup_{\stackrel{i,j\geq 1}{i+j=k}} X_i+X_j.
\end{align*}
We are going to see that the succession $\{X_k\}_{k\geq 1}$ forms a partition of $M$. On the one hand, by Lemma~\ref{GeneratorSets}, we have: 
\begin{align*}
\bigcup_{k\geq 1} X_k=\langle \! \langle M \setminus (M+M) \rangle \! \rangle=\langle M \setminus (M+M) \rangle.
\end{align*}
Since $M$ is initial $\langle M\setminus (M+M)\rangle=\Ini (M)=M$. 
On the other hand we need to prove that $X_i\cap X_j=\emptyset$, for any $i\not=j$.
First we consider the cases $X_1\cap X_j$ and $X_i\cap X_1$, provided $i,j\not=1$. We have $X_1\cap X_j= \emptyset$, since $X_1=M\setminus (M+M)$ is the set of indecomposable elements, while elements in $X_j$ are clearly decomposable by definition. For the same reason $X_i\cap X_1=\emptyset$. 

Now we prove that if $X_i\cap X_j\not =\emptyset$ with $i,j>1$, then there are some $i'<i$ and $j'<j$ such that $X_{i'}\cap X_{j'}\not =\emptyset$. 
If $X_i\cap X_j\not =\emptyset$, then there must be an element such that $x \in X_i$ and $x\in X_j$. Since $i,j>1$, $x$ can be decomposed into two forms:
\begin{align*}
& x=x_{i'}+x_{i''}, \mbox{ with } i=i'+i'', x_{i'}\in X_{i'}, x_{i''}\in X_{i''};\\
& x=x_{j'}+x_{j''}, \mbox{ with } j=j'+j'', x_{j'}\in X_{j'}, x_{j''}\in X_{j''}.
\end{align*}
Since $M$ is equidec:
\begin{align*}
x=x_{i'}+x_{i''}=x_{j'}+x_{j''} \implies x_{i'}=x_{j'} \mbox{ and } x_{i''}=x_{j''}.
\end{align*}
And this means that $X_{i'}\cap X_{j'}\not=\emptyset$ and $X_{i''}\cap X_{j''}\not=\emptyset$. Since $i=i'+i''$, $j=j'+j''$, and $i',i'',j',j''>1$, we have that $i'<i$ and $j'<j$.

By reduction to the absurd, we suppose that $X_i \cap X_j \not=\emptyset$. By applying sufficiently times the last result, we will get that there must be subscripts $i',j'$ such that either $X_1 \cap X_{j'}\not=\emptyset$ or   $X_{i'}\cap X_1 \not=\emptyset$, which is a contradiction. 
 
Finally, we define recursively the following succession of mappings:
\begin{align*}
f_1(z)& =z  \,\,\,\,\,\,\,\,\,\,\,\,\,\,\,\,\,\,\,\,\,\,\,\,\,\,\,\,\,\,\,\,\,\,\,\, \mbox{ if } z\in X_1, \\ 
f_k(z)&=f_i(x)+f_j(y)  \,\,\,\,\, \mbox{ if } z\in X_k,
\end{align*}
where $z=x+y$, $x \in X_i$ and $y \in X_j$ and $k=i+j$. Since the decomposition $x+y$ is unique and since $\{X_k\}_{k\geq 1}$ forms a partition, $i$ and $j$ are determined uniquely and $f_k$ is well-defined. 
If we join all these mappings:
\begin{align*}
f=\bigsqcup_{k\geq 1} f_k,
\end{align*}
we get a mapping $f: M \longrightarrow \M_X$ and we only need to see that it is an isomorphism of magmas. $f$ is trivially a homomorphism. In addition, since $\langleD M \setminus (M+M)\rangleD=M$, $f$ is surjective. Injectivity can be proved by induction on $k$.
\end{proof}

\begin{definition} We say that a magma $M$ is semigraded iff there is a mapping $\mu: M \longrightarrow \Nat$ such that: 
\begin{align*}
\mu(x+y)>\mu(x), \mu(y).
\end{align*}
\end{definition}

\begin{proposition} \label{PropImplications} Given a magma $M$, we have the implications:
\begin{align*}
M \mbox{ is graded } \implies M \mbox{ is semigraded} \implies M \mbox{ is initial}.
\end{align*}
\end{proposition}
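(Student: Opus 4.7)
The two implications should be essentially routine; the only real content is in the second one, and it amounts to a standard well-founded induction on $\mu$.

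For the first implication, if $M$ is graded with $\ell: M \to \Nat$ a homomorphism, then since $\Nat$ here denotes the strictly positive integers (as stated at the opening of Section~\ref{GradSemiIniFree}), both $\ell(x) \geq 1$ and $\ell(y) \geq 1$, hence $\ell(x+y) = \ell(x) + \ell(y) > \ell(x)$ and $> \ell(y)$. So $\mu := \ell$ witnesses semigradedness. This is a one-line observation.

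For the second implication, I would argue by contradiction. Suppose $M$ is semigraded via $\mu$ but $M \neq \Ini(M)$, and pick $z \in M \setminus \Ini(M)$ minimizing $\mu(z)$ (possible because $\mu$ takes values in $\Nat$, which is well-ordered). If $z$ is indecomposable, then $z \in M \setminus (M+M) \subseteq \Ini(M)$, a contradiction. Otherwise $z = x + y$ for some $x, y \in M$, and by the semigradedness inequality $\mu(x), \mu(y) < \mu(z)$, so by minimality both $x$ and $y$ lie in $\Ini(M)$. Since $\Ini(M)$ is a submagma, $z = x + y \in \Ini(M)$, again a contradiction. Hence $M = \Ini(M)$.

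There is no real obstacle; the one place to be careful is to confirm that $\Nat$ is being used in the positive-integer convention (it is, per the first sentence of Section~\ref{GradSemiIniFree}), because otherwise the first implication would need an argument ruling out $\ell(x) = 0$. The second implication is a textbook minimal-counterexample argument and just uses that $\Ini(M)$ is by definition a submagma containing every indecomposable.
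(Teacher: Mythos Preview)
Your proof is correct and follows essentially the same approach as the paper. The paper dismisses the first implication as trivial and proves the second by strong induction on $\mu(z)$; your minimal-counterexample argument is the contrapositive formulation of that same induction, with the identical case split (indecomposable versus $z=x+y$ with $\mu(x),\mu(y)<\mu(z)$) and the same use of $\Ini(M)$ being a submagma.
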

\begin{proof} The first implication is trivial. We see the second one. By definition, $\Ini(M) \subseteq M$. We see that if $M$ is semigraded with the semigradation $\mu$, then $M \subseteq \Ini(M)$.
Let $z\in M$, and we see by induction on the number $\mu(z)$ that $z\in \Ini(M)$. 
If $\mu(z)=1$, then $z$ is not decomposable, otherwise, $z=x+y$ implies $1=\mu(z)>\mu(x),\mu(y)$ is not possible, since semigradations are positive.  
This means that $z \in M\setminus (M+M) \subseteq \Ini(M)$. 

Let us suppose that the statement is true for any $x$ such that $\mu(x)<n$, with $n>1$. Let $z\in M$ such that $\mu(z)=n$.   If $z$ is not decomposable, then we are done, as above. So, let $z=x+y$. Since $\mu(x),\mu(y)<n$, by hypothesis of induction $x,y\in \Ini(M)$, and then $z=x+y \in \Ini(M)$.  
\end{proof}

\begin{theorem} \label{CoroEquiva} Let $M$ be a magma. We have the following equivalences:
\begin{enumerate}
\item $M$ is equidec and graded;
\item $M$ is equidec and semigraded;
\item $M$ is equidec and initial;
\item $M$ is free.
\end{enumerate}
\end{theorem}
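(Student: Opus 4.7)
The plan is to close the cycle of implications $(1)\Rightarrow(2)\Rightarrow(3)\Rightarrow(4)\Rightarrow(1)$, since essentially all the work has already been done in the preceding results; only the bookkeeping remains. The main obstacle is really just noticing that nothing new has to be proved.

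First I would observe $(1)\Rightarrow(2)\Rightarrow(3)$: these are immediate from Proposition~\ref{PropImplications}, which gives the chain graded $\Rightarrow$ semigraded $\Rightarrow$ initial (the equidec hypothesis is simply carried along). Next, $(3)\Rightarrow(4)$ is exactly the content of Theorem~\ref{TeoInitEqui}, which asserts that a magma is free if and only if it is equidec and initial.

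To close the cycle I would argue $(4)\Rightarrow(1)$: if $M$ is free, then by Definition~\ref{DefFreeMag} there is some set $A$ with $M\cong\M_A$, and the length $\ell:\M_A\longrightarrow\Nat$ defined there is a homomorphism satisfying $\ell(x+y)=\ell(x)+\ell(y)$, so it is a gradation; moreover $\M_A$ is equidec by Example~\ref{ExampTrivial}. Transporting along the isomorphism yields that $M$ itself is both equidec and graded.

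This establishes the four-way equivalence. The only step that is not literally a citation is $(4)\Rightarrow(1)$, but it merely amounts to recalling that the length of a free magma is the required gradation, which is built into the definition itself.
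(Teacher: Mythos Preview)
Your proof is correct and follows exactly the same route as the paper: the cycle $(1)\Rightarrow(2)\Rightarrow(3)$ via Proposition~\ref{PropImplications}, then $(3)\Rightarrow(4)$ via Theorem~\ref{TeoInitEqui}, and finally $(4)\Rightarrow(1)$ as a triviality. The only difference is that you spell out the last step (length as gradation, equidecomposability from Example~\ref{ExampTrivial}) where the paper simply calls it trivial.
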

\begin{proof} Implications $(1) \Rightarrow (2) \Rightarrow (3)$ are consequence of Proposition~\ref{PropImplications}.  By Theorem~\ref{TeoInitEqui}, $(3) \Rightarrow (4)$. The last implication $(4) \Rightarrow (1)$ is trivial. 
\end{proof}

\begin{example} Consider the magma $\Nat_\uparrow=(\Nat; \uparrow)$,  where:
\begin{align*}
x\uparrow y=2^x\cdot 3^y.
\end{align*}
By the uniqueness of decomposition in prime numbers, the magma is equidec. It can be graded, but there is not a closed formula. However, it is semigraded by the identity mapping $\mu(x)=x$, and, by Theorem~\ref{CoroEquiva}, it is free. 
This magma is not finitely generated. For instance, any number in $\Nat \setminus 6 \Nat$ is, amongst others, an indecomposable element under $\uparrow$ operation. 
\end{example}

\begin{example} The condition of being initial is strictly weaker than the condition of being graded. Consider, for example, the magma $M=\{a,b\}$ with the constant operation:
$ x + y= b$, for any $x,y \in M$. 
$M$ is an initial magma $\Ini(M)=\langle a \rangle = M$, but it cannot be graded nor semigraded. 
\end{example}

\section{Full magmas and a decomposition theorem}
\label{FullMagmas}

\begin{definition} We say that a magma is \emph{full} iff every element has a decomposition in the magma.   
\end{definition}

\begin{example} The trivial magma $\I$ is full. Any semilattice $(L; \wedge)$ is full by doing $x=x\wedge x$, for any $x\in L$. Any group, actually any monoid,  $(G;\cdot)$ is full by doing $g=1\cdot g$, for any $g\in G$. 
The magmas $\M_A$, $\Tree$, and $\Lan$ are not full, with the counterexamples: any generator $a\in A$, the tree with only one vertex $\circ$, and the language $\varepsilon$, respectively. 
\end{example}

\begin{example} \label{ExampleTrivialFull} 
There are some equivalent ways to say that a magma is full. $M$ is full iff the the operation is surjective. $M$ is full iff $M+M=M$. Even, $M$ is full iff  $\Ini(M)=\emptyset$. 
In particular, semigraded magmas are not full. Notice that in order to see if a magma $M=\langle A \rangle$  is full we only have to inspect the generators, since elements in $M \setminus A$ are trivially decomposable.    

Let $M,N$ be magmas. If $M,N$ are full, then $M\times N$ is full, and the free product $M*N$ is full. If $f: M \longrightarrow N$ is a homomorphism and $M$ is full, then $f(M)$ is full. 
\end{example}

\begin{example} \label{ExampDiamond} Let us consider a non-trivial example of full and equidec magma. A magma is equidec and full iff the operation is injective and surjective, that is, $+:M\times M \longrightarrow M$ is a bijection.
A classic geometric construction proves that $\Nat$ and $\Nat^2$ are equinumerous, see \cite{enderton1977elements}:
\begin{align*}
 \footnotesize
  \xymatrix@C-0.4pc@R-0.4pc{(1,4) \ar[dr] &&&\\
(1,3) \ar[dr] & (2,3) \ar[dr] &&\\
(1,2) \ar[dr] & (2,2) \ar[dr] & (3,2) \ar[dr] &\\
(1,1)  \ar[u]& (2,1) \ar@/^0.3pc/[uul] & (3,1) \ar@/^0.2pc/[uuull] & (4,1) \ar@/^0.3pc/[uuull] }
\end{align*}
There is a closed expression for this bijection which defines an operation:
\begin{align*}
x \diamond y= \frac{1}{2} ( x^2+y^2+2xy -x-3y+2  ).
\end{align*}
Therefore, $\Nat_\diamond=(\Nat; \diamond)$ is an equidec and full magma.  
We notice that we have the relations: 
\begin{align*}
1\diamond 1=1 \,\,\,\, \mbox{ and } \,\,\,\, 2=1 \diamond 2.
\end{align*}
By using the following property:
\begin{align*}
(x+1) \diamond (y-1) =(x \diamond y )+1,
\end{align*}
provided that $y>1$, one can prove by induction that $\langle 1,2 \rangle =\Nat$. 
In addition, we have that for any $x,y\geq 3$, $x \diamond y>x,y$, disallows any other non-trivial relation. Actually, the identity $\mu(x)=x$ is a semigradation for the submagma $\Nat\setminus \{1,2\}$, which means that it is free by Theorem~\ref{CoroEquiva}. 
\end{example}

\begin{example} \label{JonssonTarskiAlg} A \emph{J\'onsson-Tarski algebra}, or a \emph{Cantor algebra}, is an algebra $(M; +, p,q)$ of type (2,1,1), that is, a binary operation $+$ and two unary operations $p,q$, such that:
\begin{align*}
p(x+y)=x, \,\,\,\, q(x+y)=y, \,\,\,\, p(z)+q(z)=z,
\end{align*}
for each $x,y,z \in M$. See \cite{Jonsson1961Properties}. It turns that, given an algebra  $(M;+,p,q)$ of type (2,1,1), $(M;+,p,q)$ is a J\'onsson-Tarski algebra iff $(M;+)$ is a full and equidec magma. The operations $p$ and $q$ are just the first and the second component of the decomposition of each element. Given a full and equidec magma $M$ we write $\widetilde{M}$ for its J\'onsson-Tarski algebra.

By taking the three operations $+,p,q$, finitely generated J\'onsson-Tarski algebras are always cyclic. If $A\cup \{x,y\}$ generates a J\'onsson-Tarski algebra, where $x,y \not  \in A$, then $A\cup \{x+y\}$ equally generates the algebra. Thus, by induction, one can reduce the generators to a single one. 
For example, $\widetilde{\Nat}_\diamond$ is generated by $2$ as J\'onsson-Tarski algebra but we need two generators $1,2$ as the magma $\Nat_\diamond$.
See Example~\ref{ExempNonEmagma} and last Remark~\ref{LastJonssonTarski} for more aspects on this class of algebras.   
\end{example}

\begin{example} \label{ExamSeq} Let $\Seq(X)$ be the set of infinite sequences over a finite non-empty set $X$. Consider the ``shuffle'' operation:
\begin{align*}
(a_1, a_2, a_3, \ldots) \curlyvee (b_1, b_2, b_3, \ldots)=(a_1,b_1,a_2, b_2, a_3, b_3, \ldots).
\end{align*}
$(\Seq; \curlyvee)$ is equidec and given a sequence we always can decompose it into two sequences:
\begin{align*}
(a_1, a_2, a_3,a_4,a_5,a_6 \ldots)=(a_1,a_3,a_5,\ldots) \curlyvee (a_2, a_4, a_6, \ldots).
\end{align*}
Hence, $\Seq(X)$ is a full magma. 
When $|X|=1$,  $\Seq(X) \cong \I$. Let us see some submagmas in $\Seq(X)$:
\begin{enumerate}
\item Of course, if $X \subseteq Y$, we have the submagma $\Seq(X) \subseteq \Seq(Y)$. 

\item Let $X=\{0, \alpha_1, \ldots, \alpha_n\}$. We have:
\begin{align*}
\langle (\alpha_1, 0,0,0, \ldots), \ldots, (\alpha_n, 0,0,0, \ldots)\rangle \cong \M_{X\setminus \{0\}}.
\end{align*}

\item We say that a sequence $(a_n)$ is \emph{periodic} iff there is a positive integer $T$, called a \emph{period}, such that: 
\begin{align*}
n \equiv m \mod T \implies a_n=a_m.
\end{align*}
$\tau(a_n)$ denotes the least of the periods of $a_n$, provided it is periodic. 
If $a_n$ and $b_n$ are periodic sequences, then $a_n \curlyvee b_n$ is periodic and:
\begin{align*}
\tau ( a_n \curlyvee b_n)\leq 2\cdot \lcm (\tau(a_n), \tau (b_n)).
\end{align*}
Therefore, the set of periodic sequences $\Per(X) \subset \Seq(X)$ is a submagma.  
\end{enumerate}
\end{example}

\begin{remark} For any arbitrary family of full submagmas $N_i \subseteq M$, $i\in I$ for some set of subscripts $I$, the submagma $H=\bigvee_{i \in I} N_i$ is full. As we commented in Example~\ref{ExampleTrivialFull}, we only need to see whether the generators of a generating set of $H$ are decomposable, for example, $\bigcup_{i \in I} N_i$. Since the elements in $N_i$ are decomposable in $N_i$ for each $i \in I$, $H$ is full.   
In other words, there exists always the largest full submagma of a given magma.
Notice that, in general, the intersection of full magmas is not longer full. And in particular, a full magma can contain non-full submagmas. 
\end{remark}
\begin{notation} Let $M$ be a magma. We write: 
\begin{align*}
\Full(M)=\bigvee_{N \subseteq M \mbox{ full}} N,
\end{align*}
the largest full submagma in $M$. Thus, $M$ is full iff $\Full(M)=M$. 
\end{notation}

\begin{lemma} \label{LemmaEquiFullClosed} The full submagmas of a given equidec magma are closed.
\end{lemma}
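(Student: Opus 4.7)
The plan is short because the statement collapses to a one-line verification once the definitions are unpacked. I would start by fixing an equidec magma $M$, a full submagma $N \subseteq M$, and an arbitrary element $z \in N$ that admits a decomposition $z = x + y$ with $x, y \in M$ (not a priori in $N$). The goal is to force $x, y \in N$.

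The key step uses fullness of $N$: by hypothesis every element of $N$ decomposes \emph{within} $N$, so there exist $x', y' \in N$ with $z = x' + y'$. Now I have two decompositions of the same element $z \in M$:
\begin{align*}
x + y = z = x' + y'.
\end{align*}
Since $M$ is equidec, the operation is injective on pairs, hence $x = x'$ and $y = y'$. But $x', y' \in N$ by construction, so $x, y \in N$, which is exactly the closedness condition.

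There is no real obstacle here, and no induction or case analysis is needed; the lemma is essentially a tautological combination of the three definitions (full, equidec, closed). The only thing worth emphasizing in the write-up is that fullness supplies the \emph{existence} of a decomposition inside $N$, while equidecomposability supplies the \emph{uniqueness} of the decomposition inside $M$, and the intersection of these two facts is precisely closedness. I would state it in exactly that order so the role of each hypothesis is visible.
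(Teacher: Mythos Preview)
Your proposal is correct and is essentially identical to the paper's own proof: both take $z=x+y\in N$, use fullness of $N$ to produce a decomposition $z=x'+y'$ with $x',y'\in N$, and then invoke equidecomposability to conclude $x=x'$, $y=y'$. If anything, your version is slightly cleaner, since you apply equidecomposability directly in $M$ (where $x,y$ live) rather than passing through the unnecessary remark that $N$ inherits equidecomposability.
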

\begin{proof} Let $N$ be a submagma of a magma $M$, and let $z=x+y\in N$. Since $N$ is full, $z$ can be decomposed $z=x'+y'$ with $x',y' \in N$. Since $M$ is equidec, $N$ is equidec, and then $x=x', y=y'$. Thus, $x,y \in N$, and hence, $N$ is closed.     
\end{proof}

\begin{theorem} \label{TheoSplit} Each equidec magma $M$ can be split as:
\begin{align*}
M= \Ini(M) \sqcup \Full(M), 
\end{align*}
where $\sqcup$ is the disjoint union of sets. In addition, the initial part is free.
\end{theorem}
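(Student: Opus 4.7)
The plan is to establish three facts — freeness of $\Ini(M)$, disjointness $\Ini(M)\cap\Full(M)=\emptyset$, and the covering inclusion $M\setminus\Ini(M)\subseteq\Full(M)$ — the last two of which together give the partition. For freeness, $\Ini(M)$ inherits equidecomposability as a submagma of $M$, and it is initial because every element of $M\setminus(M+M)$ remains indecomposable in $\Ini(M)$, whence $\Ini(\Ini(M))=\Ini(M)$. Theorem~\ref{TeoInitEqui} then delivers freeness.

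For the disjointness, I would argue by descent on the length in the free magma $\Ini(M)$. Suppose $z\in\Ini(M)\cap\Full(M)$. If $z$ has length $1$ it is indecomposable in $M$, contradicting $z\in\Full(M)$ (a full submagma contains no indecomposables). Otherwise $z=x+y$ with $x,y\in\Ini(M)$ of strictly smaller length, and by Lemma~\ref{LemmaEquiFullClosed} the full submagma $\Full(M)$ is closed, so the unique decomposition in $M$ places $x,y$ in $\Full(M)$ as well. Induction on length completes the argument.

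For the covering inclusion, the strategy is to prove that $M\setminus\Ini(M)$ is itself a full submagma, hence contained in $\Full(M)$. First I would verify that $\Ini(M)$ is closed in $M$: if $x+y\in\Ini(M)$, the inductive description $\Ini(M)=\bigcup_k X_k$ from Lemma~\ref{GeneratorSets} combined with uniqueness of decomposition in the equidec magma $M$ pins down $x,y\in\Ini(M)$. Hence $M\setminus\Ini(M)$ is a submagma. For fullness, any $z\in M\setminus\Ini(M)$ must be decomposable — otherwise $z$ is an indecomposable of $M$ and so lies in $\Ini(M)$ — say $z=x+y$; one must then check that both components avoid $\Ini(M)$.

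This last verification is where I expect the main obstacle. The combination of closedness and the submagma property of $\Ini(M)$ only yields the equivalence ``$z=x+y\in\Ini(M)\iff x,y\in\Ini(M)$'', so from $z\notin\Ini(M)$ one merely concludes that not both of $x,y$ lie in $\Ini(M)$; the asymmetric mixed case in which exactly one component sits inside $\Ini(M)$ is not immediately ruled out by closedness alone. Settling it calls for a finer use of the explicit level-by-level construction of $\Ini(M)$ together with equidecomposability of $M$, in order to exclude any decomposition $z=x+y$ with one part indecomposable-derived and the other not. Once this is in hand, $M\setminus\Ini(M)$ is full and therefore contained in $\Full(M)$, and combined with the disjointness one obtains $M=\Ini(M)\sqcup\Full(M)$.
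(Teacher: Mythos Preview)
Your instinct about the ``mixed case'' is exactly right --- and it is not a removable technicality but a genuine obstruction: the partition $M=\Ini(M)\sqcup\Full(M)$ fails in general, so no refinement of the level-by-level argument will close the gap you flagged.

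Consider $M=\M*\I\cong\langle a,b\mid b\approx b+b\rangle$, realised concretely as the submagma $\langle\circ,t\rangle\subseteq\Tree$ (with $\circ$ the one-vertex tree and $t$ the full infinite binary tree, as in the paper's own examples). The only indecomposable element is $\circ$, so $\Ini(M)=\langle\circ\rangle\cong\M$. On the other hand $\Full(M)=\{t\}$: every element of $\langle\circ,t\rangle$ other than $t$ has $\circ$ appearing after finitely many decomposition steps, and any full submagma containing such an element would, by equidecomposability together with Lemma~\ref{LemmaEquiFullClosed}, be forced to contain the indecomposable $\circ$. Now the element $\circ+t$ lies in $M$ but in neither piece, and its unique decomposition is precisely the mixed one you anticipated: $\circ\in\Ini(M)$, $t\notin\Ini(M)$. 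Hence $M\setminus\Ini(M)$ is \emph{not} full, and $\Ini(M)\sqcup\Full(M)\subsetneq M$.

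The paper's proof makes exactly the slip you isolated: after showing that $M\setminus\Ini(M)$ is a submagma, it argues only that each $z\in M\setminus\Ini(M)$ is decomposable \emph{in $M$} (since indecomposables lie in $\Ini(M)$) and then declares $M\setminus\Ini(M)$ full, without checking that the components of $z$ remain in the complement. Your freeness argument for $\Ini(M)$, your disjointness argument $\Ini(M)\cap\Full(M)=\emptyset$, and your proof that $\Ini(M)$ is a closed submagma are all sound; what cannot be completed is the fullness of $M\setminus\Ini(M)$, and with it the theorem as stated. (The same example shows the paper's claim $\Full(\Tree)=\Tree_\infty$ is likewise incorrect: $\circ+t\in\Tree_\infty$ has no decomposition inside $\Tree_\infty$.)
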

\begin{proof} We consider the decomposition: $M=\Ini(M) \sqcup (M\setminus \Ini(M))$. First we show that $M\setminus \Ini(M)$ is a submagma of $M$. 
Let $x,y \in M \setminus \Ini(M)$, which means that $x$ and $y$ cannot be generated by indecomposable elements. Let us suppose that $x+y \in \langle a_1, \ldots, a_n \rangle$ for some $a_1, \ldots, a_n \in M \setminus (M+M)$. Then, there is a polynomial $P$ over $M$ such that $x+y=P(a_1, \ldots, a_n)$. If $\ell(P)=1$, then $n=1$ and $x+y=a_1$, which means that $x+y$ is indecomposable, which is clearly absurd. If $\ell(P)>1$, then there are two polynomials $Q,R$ such that $P=Q+R$, and then $x+y=Q(a_1, \ldots, a_n)+ R(a_1, \ldots, a_n)$. Since $M$ is equidec, we have that $x=Q(a_1, \ldots, a_n)$ and $y=R(a_1, \ldots, a_n)$, whereby $x,y$ should be generated by initial elements, which is a contradiction. This proves that $M \setminus \Ini(M)$ is a submagma. 

Now we see that $M \setminus \Ini(M)$ is full. Let $z\in M\setminus \Ini(M)$ and we suppose that $z$ is indecomposable. Then $z\in M\setminus (M+M) \subseteq \Ini(M)$,  which is clearly a contradiction.

Finally we see that for any other full submagma $N$ we have $N \subseteq M\setminus \Ini(M)$. Let us suppose that $N \not \subseteq M \setminus \Ini(M)$. This means that $N \cap \Ini(M)\not = \emptyset$. Let $x \in N \cap \Ini(M)$. $x$ should be generated by indecomposable elements. That is, there is a polynomial $P$ and indecomposable elements $a_1, \ldots, a_n$ such that $x=P(a_1, \ldots, a_n)$. By Lemma~\ref{LemmaEquiFullClosed}, $N$ is closed, which means that $a_1, \ldots a_n \in N$.  However, $N$ is full by hypothesis and their elements must be decomposable which is a contradiction. This means that $M \setminus \Ini(M)$ is the largest full submagma, $\Full(M)=M \setminus \Ini(M)$. 

The initial part of a magma is a initial magma. By Theorem~\ref{TeoInitEqui}, $\Ini(M)$ is free. 
\end{proof}

\begin{example} Theorem~\ref{TheoSplit} provides a first way to differentiate equidec magmas, since isomorphic magmas must have isomorphic parts. Let us calculate them for the known magmas, which demonstrates that they are quite different structures:
\begin{enumerate}
\item Free magmas, $\M_A$, does not have full part. That is:
\begin{align*}
\Ini(\M_A)=\M_A, \,\,\,\,\,\,\,\,\,\, \Full(\M_A)=\emptyset.
\end{align*}

\item The initial part of the magma of trees, $\Tree$, coincides with the submagma of finite trees, while the full part coincides with the submagma of infinite trees: 
\begin{align*}
\Ini(\Tree)=\Tree_{<\infty}\cong \M, \,\,\,\,\,\,\,\,\, \Full(\Tree)= \Tree_{\infty}.
\end{align*}

\item The magmas $\Nat_\diamond$ and $\Seq(X)$ are full, so these magmas have no initial part:
\begin{align*}
\Ini(\Nat_\diamond)=\emptyset,& \,\,\,\,\,\,\,\,\,\,\,\,\,\,\,\, \Full(\Nat_\diamond)= \Nat_\diamond, \\
\Ini(\Seq(X))=\emptyset,& \,\,\,\,\,\,\,\,\,\,\,\,\,\,\,\, \Full(\Seq(X))= \Seq(X).
\end{align*}
We remark that the initial part of an equidec magma is free, but it is not necessarily  the largest free submagma in $M$. In the Example~\ref{ExamSeq}, we saw that $\Seq(X\sqcup\{0\})$ contains the free magma $\M_X$, whereas $\Ini(\Seq(X))=\emptyset$.

\item In the case of the magma of languages $\Lan$, we have that:
\begin{align*}\Ini(\Lan)=\langle \varepsilon \rangle \vee \langle \alpha \Lan \cup \beta \Lan \rangle.
\end{align*}
Let us see it. Given $x\in \Sigma^*$, let $\First(x)$ be the first letter of $x$, when  $x\not=\varepsilon$, and $\First(\varepsilon)=\varepsilon$. Then:
\begin{align*}
 & \Lan\oplus \Lan =\{ L \in \Lan \mid \First(L)=\Sigma\}\\
\implies & \Lan \setminus (\Lan\oplus \Lan)=\{ L \in \Lan \mid \First(L)\not=\Sigma\}\\
&=\{L \in \Lan \mid \First(L)=\{\varepsilon\}, \{\alpha\}, \mbox{ or }\{\beta\} \}\\  
 \implies & \Ini(\Lan)=\langle \Lan \setminus (\Lan\oplus \Lan)\rangle=\langle \{\varepsilon\} \cup \alpha \Lan \cup \beta \Lan \rangle\\
 &= \langle \varepsilon \rangle \vee \langle \alpha \Lan \cup \beta \Lan \rangle.
\end{align*}
\end{enumerate} 
\end{example}

The direct product of equidec magmas behaves in a different way to other algebraic structures, such as monoids or groups. For example, there are not the inclusion monomorphisms $M,N \longrightarrow M \times N$. Recall that an equidec magma does not have a neutral element, but for the trivial magma, see Example~\ref{ExampTrivial};  and, in general, equidec magmas need not have idempotent elements. A bit more surprisingly, an equidec magma by a free magma is a free magma: 
\begin{theorem} \label{TeoProduct} If $M$ is an equidec magma, then:
\begin{align*}\M_A \times  M \cong \M_{(A \times M) \cup (\M_A \times \Indec(M)) },
\end{align*}
where $\Indec(M)=M\setminus (M+M)$ is the set of indecomposable elements of $M$.
\end{theorem}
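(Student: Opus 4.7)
The plan is to apply Theorem~\ref{TeoInitEqui}: a magma is free iff it is both equidec and initial. So I will verify these two properties for $\M_A \times M$ and then identify its unique minimal generating set as the set of indecomposable elements, which turns out to be exactly $(A \times M) \cup (\M_A \times \Indec(M))$.

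First I would observe that $\M_A \times M$ is equidec. Since addition in the direct product is coordinate-wise, if $(x_1,y_1)+(x_2,y_2) = (x_1',y_1')+(x_2',y_2')$ then $x_1+x_2 = x_1'+x_2'$ in $\M_A$ and $y_1+y_2 = y_1'+y_2'$ in $M$, and equidecomposability of each factor gives the conclusion. Next I would identify the indecomposable elements. A pair $(x,y)$ is decomposable in $\M_A \times M$ iff both $x$ is decomposable in $\M_A$ and $y$ is decomposable in $M$; equivalently, $(x,y)$ is indecomposable iff $x \in A$ or $y \in \Indec(M)$. Therefore
\begin{align*}
\Indec(\M_A \times M) = (A \times M) \cup (\M_A \times \Indec(M)).
\end{align*}

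Next I would show that $\M_A \times M$ is initial, by induction on the length $\ell(x)$ of the first coordinate. If $\ell(x)=1$ then $x \in A$, so $(x,y)$ already lies in the indecomposable set. If $\ell(x)>1$, write $x = x_1 + x_2$ with $\ell(x_i) < \ell(x)$. If $y \in \Indec(M)$ then $(x,y)$ is itself indecomposable. Otherwise $y = y_1 + y_2$, and $(x,y) = (x_1,y_1) + (x_2, y_2)$; by the induction hypothesis each summand lies in $\langle \Indec(\M_A \times M)\rangle$. Hence the whole magma is generated by its indecomposable elements, i.e. $\M_A \times M$ is initial.

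Applying Theorem~\ref{TeoInitEqui} then yields that $\M_A \times M$ is free, and its unique minimal generating set (by Remark~\ref{ReIndecomposable} and the characterization of free magmas as generated by their indecomposables) is $(A \times M) \cup (\M_A \times \Indec(M))$, giving the claimed isomorphism. The induction in the initiality step is the only nontrivial point, and the main thing to be careful about is that the set-theoretic union $(A \times M) \cup (\M_A \times \Indec(M))$ accounts correctly for the overlap $A \times \Indec(M)$ — but this is exactly how the minimal generating set behaves, so no duplication issue arises.
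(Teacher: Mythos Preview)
Your proof is correct, but the paper takes a slightly more direct route. Instead of proving initiality by induction on $\ell(x)$, the paper simply observes that $\tilde{\ell}(x,y) := \ell(x)$ is a gradation on $\M_A \times M$ (since $\ell$ is a gradation on $\M_A$), and then invokes the characterization ``equidec $+$ graded $\Rightarrow$ free'' from Theorem~\ref{CoroEquiva}. Both arguments rest on the same idea---the length of the first coordinate controls everything---but the paper packages it as a one-line gradation rather than an inductive verification of initiality. Your approach has the small advantage of being self-contained once Theorem~\ref{TeoInitEqui} is available, and it makes the role of the indecomposable set more explicit; the paper's approach is shorter. The identification of $\Indec(\M_A \times M)$ and the conclusion about the generating set are handled identically in both.
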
 
\begin{proof} We define the gradation $\tilde{\ell}(x,y)= \ell(x)$, where $\ell(x)$ is the gradation of the free magma $\M_A$. Since the product of equidec magmas is equidec, $\M_A \times  M$ is a free magma. Now notice that the indecomposable elements are:
\begin{align*}\Indec (\M_A\times M)=(A \times M) \cup (\M_A \times \Indec(M)).\end{align*}
Since the product is a free magma, it is an initial magma generated by these elements. Then, the isomorphism is defined over the generators by the identity.
\end{proof}

\begin{example}  Some particular cases of Theorem~\ref{TeoProduct} are the following:
\begin{enumerate}
\item  When $M=\I$, $\Indec(M)=\emptyset$, and we have: 
\begin{align*}
\M_A \times \I=\M_{(A \times \I) \cup (\M_A \times \emptyset)}\cong \M_{A \times \I}\cong \M_A. 
\end{align*}
\item If $M$ is full, then $\Indec(M)=\emptyset$, whereby:
\begin{align*} \M_A \times  M \cong \M_{ A \times M }.
\end{align*}
\item When $M=\M_B$, $\Indec(\M_B)=B$, and we have:
\begin{align*} 
\M_A \times \M_B \cong \M_{(A \times \M_B) \cup (\M_A \times B)}.
\end{align*}
Notice that:
$$\M \not \cong \M^2 \cong \M^3 \cong \M^4 \cong \cdots \cong \M_\omega,$$
where $\omega$ is any countably infinite set. 
\end{enumerate}
\end{example}

\section{Closed submagmas of a free magma and quotient characterization}
\label{ClosedSubMagmas}

\begin{notation} Given a set $X$, $\Delta(X)=\{ (x,x) \mid x \in X\}$ denotes the \emph{diagonal set}. Let $M$ be a magma.  
 A \emph{congruence on $M$} is an equivalence relation such that it is in addition a submagma of $M^2$. 
Given a subset $X \subseteq M^2$, $\Xi(X)$ denotes the least \emph{equivalence relation} containing $X$. 
$\Theta(X)$ denotes the least congruence in $M^2$ containing  $X$. Thus, $\Xi(X) \subseteq \Theta(X)$, and $\langle X \rangle \subseteq \Theta(X)$. Given a congruence $\theta$ on $M$, $[x]=\{ y \mid (x,y) \in \theta \}$ is the \emph{equivalence class} of the element $x\in M$. The \emph{natural projection} is the epimorphism $\pi: M \longrightarrow  M/\theta$, $\pi (x)=[x]$.  
\end{notation}

We remark that congruences on $\M_A$ are subsets of:
\begin{align*}
\M_A^2 \cong \M_{(A\times \M_A) \cup (\M_A \times A)},
\end{align*}
which is a non-finitely generated free magma, recall Theorem~\ref{TeoProduct}.
$\G(N)$ denotes the unique minimal generator set of a submagma $N \subseteq \M_A$, Definition~\ref{DefMinGen}.  
\begin{lemma} \label{LemaClosedMagmas} Let $N$ be a submagma of a free magma $N \subseteq \M_A$. We have that $N$ is closed iff $\G(N) \subseteq A$.
In particular, the unique closed submagmas of the free magma $\M_A$ are $\M_B$ with $B\subseteq A$.
\end{lemma}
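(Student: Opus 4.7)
The plan is to prove each direction of the equivalence separately, and then deduce the ``in particular'' clause from the Galois connection between $\langle\cdot\rangle$ and $\G$ together with the first part.

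For the forward direction $(\Rightarrow)$, I would proceed by first establishing the stronger statement $N=\langle N\cap A\rangle$ whenever $N$ is a closed submagma of $\M_A$. This is proved by induction on the length $\ell(z)$ of an arbitrary $z\in N$: the base case $\ell(z)=1$ places $z$ in $A\cap N$, and for $\ell(z)>1$ closedness yields a decomposition $z=x+y$ with $x,y\in N$ of strictly smaller length, so $z\in\langle N\cap A\rangle$ by the induction hypothesis. Since $\langle\cdot\rangle$ and $\G$ form a Galois connection, this gives $\G(N)=\G(\langle N\cap A\rangle)\subseteq N\cap A\subseteq A$.

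For the converse $(\Leftarrow)$, I would assume $\G(N)\subseteq A$ and apply Lemma~\ref{SubmagmaCondition} directly. If $x+y\in N$, the lemma gives two cases: either $x,y\in N$, which is what closedness demands, or $x+y\in\G(N)\subseteq A$. The latter is impossible because by Definition~\ref{DefFreeMag} the elements of $A$ lie in $X_1$ and therefore admit no decomposition in $\M_A$.

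Finally, for the ``in particular'' clause: if $N$ is closed, set $B=\G(N)\subseteq A$, so $N=\langle B\rangle$; since $\M_A$ is free on $A$ and $B\subseteq A$, the submagma of $\M_A$ generated by $B$ is canonically the free magma $\M_B$. Conversely, for any $B\subseteq A$ one has $\G(\M_B)=B\subseteq A$, and $\M_B$ is closed by the equivalence just proved. I do not expect a serious obstacle, but the point that needs care is the forward direction: phrasing ``$g\in\G(N)$ forces $g\in A$'' without circularity. The length-induction sketched above sidesteps any appeal to the recursive definition of $g_N$ and instead invokes only the abstract Galois connection, keeping the argument transparent.
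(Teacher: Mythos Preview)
Your proof is correct. The backward direction $(\Leftarrow)$ and the ``in particular'' clause match the paper's treatment (the paper leaves the latter implicit, but your spelling-out is fine).

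For the forward direction $(\Rightarrow)$ you take a slightly different route. The paper argues directly from the recursive definition of $g_N$: if $z\in N$ is decomposable in $\M_A$, say $z=x+y$, then closedness forces $x,y\in N$, so the first clause of Definition~\ref{DefMinGen} applies and $z$ is not a minimal generator; hence every element of $\G(N)$ is indecomposable in $\M_A$, i.e.\ lies in $A$. You instead prove the stronger intermediate fact $N=\langle N\cap A\rangle$ by induction on length, and then invoke the Galois connection $\G(\langle X\rangle)\subseteq X$ to conclude. Your approach is a touch longer but has the advantage you yourself note: it avoids unpacking the recursion in $g_N$ and makes explicit the useful identity $N=\M_{N\cap A}$, which is exactly what the ``in particular'' clause asserts. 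The paper's approach is more economical but relies on the reader seeing why a decomposable-in-$N$ element cannot appear in $\G(N)$, which is immediate from the definition once stated but is left tacit.
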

\begin{proof} 
 $(\Rightarrow)$  Let $z\in N$ and suppose that $z$ can be decomposed $z=x+y$. Since $N$ is closed, $x,y \in N$. Hence $z$ is not a generator. This means that the generators of $N$ are those that cannot be decomposed, that is, the elements in $A$.

$(\Leftarrow)$ By Lemma~\ref{SubmagmaCondition} if $x+y \in N$ then either $x,y \in N$ or $x+y \in \G(N) \subseteq A$. But this last one is not possible since generators in $A$ are indecomposable. therefore, $N$ is closed.  
\end{proof}

\begin{lemma} \label{LemaEquiClosed} Let $f: M \longrightarrow N$ be a homomorphism of magmas. Then, $f(M)$ is equidec iff $\ker f$ is a closed submagma of $M^2$.
\end{lemma}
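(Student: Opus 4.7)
The plan is to unwind the definitions and use the fact that $M^2$ carries the componentwise operation $(u,v) + (u',v') = (u+u', v+v')$, so that the statement reduces to a direct calculation. The bridge between the two sides is that the equation $f(u+u') = f(v+v')$ is, by the homomorphism property, the same as $f(u)+f(u') = f(v)+f(v')$ in $f(M)$, and it also expresses that the element $(u,v) + (u',v')$ lies in $\ker f$.

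For the $(\Rightarrow)$ direction, I would assume $f(M)$ is equidec and take an arbitrary element of $\ker f$ of the form $(u,v) + (u',v') = (u+u', v+v')$. By definition of $\ker f$ and because $f$ is a homomorphism, this gives $f(u) + f(u') = f(v) + f(v')$ in $f(M)$. Applying equidecomposability of $f(M)$ yields $f(u)=f(v)$ and $f(u')=f(v')$, i.e., $(u,v),(u',v') \in \ker f$. Hence $\ker f$ is closed in $M^2$.

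For the $(\Leftarrow)$ direction, assume $\ker f$ is closed and suppose $a+b=a'+b'$ with $a,b,a',b' \in f(M)$. Choose preimages $a=f(u), b=f(u'), a'=f(v), b'=f(v')$. Then $f(u+u') = f(v+v')$, so $(u+u', v+v') = (u,v)+(u',v') \in \ker f$. Closedness then forces $(u,v),(u',v') \in \ker f$, so $a=a'$ and $b=b'$. Thus $f(M)$ is equidec.

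There is no real obstacle: once one notices that the componentwise addition on $M^2$ makes the kernel condition $(u,v)+(u',v') \in \ker f$ equivalent, via $f$ being a homomorphism, to the equation $f(u)+f(u')=f(v)+f(v')$ in $f(M)$, both directions become one-line arguments. The only thing worth highlighting in the write-up is that $\ker f$ is automatically a submagma of $M^2$ (it is a congruence), so the content of the lemma lies entirely in the closedness property.
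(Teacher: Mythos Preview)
Your proof is correct and follows essentially the same approach as the paper: the paper presents the argument as a single chain of biconditionals linking ``$f(M)$ is equidec'' to ``$\ker f$ is closed'' via the homomorphism property and the componentwise operation on $M^2$, while you split it into the two directions, but the content is identical. Your remark that $\ker f$ is automatically a congruence (hence a submagma of $M^2$), so that only closedness is at issue, is exactly the point.
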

\begin{proof} We have the following equivalences, where the statements must be read with the prefix ``for any $x,y,x',y' \in M$'':
\begin{align*} 
 & f(M) \mbox{ is equidec}\\
\iff & \mbox{ if } f(x)+f(x')=f(y)+f(y'), \mbox{ then } f(x)=f(y),f(x')=f(y')\\
\iff & \mbox{ if } f(x+x')=f(y+y'), \mbox{ then } f(x)=f(y),f(x')=f(y')\\
\iff & \mbox{ if } (x+x',y+y') \in \ker f, \mbox{ then } (x,y),(x',y') \in \ker f\\
\iff & \mbox{ if } (x,y)+ (x',y') \in \ker f, \mbox{ then } (x,y),(x',y') \in \ker f\\
\iff & \ker f \mbox{ is closed}. \qedhere
\end{align*}
\end{proof}

\begin{theorem} \label{CaracEquidec} Let $\theta \subseteq \M_A
^2$ be a congruence on  $\M_A$. The following statements are equivalent:
\begin{enumerate}
\item $\M_A /\theta$ is equidec;
\item $\theta$ is closed;
\item $\G(\theta)\subseteq (A\times \M_A) \cup (\M_A \times A)$.
\end{enumerate}
\end{theorem}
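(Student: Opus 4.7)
My plan is to establish the two equivalences $(1)\iff(2)$ and $(2)\iff(3)$ separately, each by direct application of results already proved in the preceding pages.

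For the first equivalence, I would consider the natural projection $\pi: \M_A \longrightarrow \M_A/\theta$, noting that $\pi(\M_A) = \M_A/\theta$ and $\ker \pi = \theta$. Lemma~\ref{LemaEquiClosed} then gives immediately that $\M_A/\theta$ is equidec iff $\theta$ is closed in $\M_A^2$, which is exactly $(1)\iff(2)$.

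For the second equivalence, I would first apply Theorem~\ref{TeoProduct} with both factors equal to $\M_A$. Since $\Indec(\M_A) = A$, this yields the isomorphism
\begin{align*}
\M_A \times \M_A \cong \M_{(A \times \M_A) \cup (\M_A \times A)},
\end{align*}
so $\M_A^2$ is itself a free magma, with minimal generating set $X = (A \times \M_A) \cup (\M_A \times A)$. Being a congruence, $\theta$ is in particular a submagma of $\M_A^2$, so Lemma~\ref{LemaClosedMagmas} applies and shows that $\theta$ is closed iff $\G(\theta) \subseteq X$, which is exactly condition $(3)$.

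The main obstacle, if it can be called one, is purely bookkeeping: one must transport the notion of closedness through the isomorphism between the product magma $\M_A^2$ and the free magma on $X$. Since closedness is defined purely in terms of the magma operation and is preserved under isomorphism, nothing further is required. Both equivalences then follow immediately from the lemmas already in hand, without any genuine new combinatorial input.
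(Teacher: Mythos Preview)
Your proposal is correct and follows essentially the same route as the paper: both use the natural projection together with Lemma~\ref{LemaEquiClosed} for $(1)\iff(2)$, and both invoke the identification $\M_A^2\cong\M_{(A\times\M_A)\cup(\M_A\times A)}$ (which the paper had already recorded just before Lemma~\ref{LemaClosedMagmas}, via Theorem~\ref{TeoProduct}) together with Lemma~\ref{LemaClosedMagmas} for $(2)\iff(3)$. Your explicit remark about transporting closedness across the isomorphism is a welcome piece of hygiene that the paper leaves implicit.
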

\begin{proof} We consider the epimorphism $\pi: \M_A \longrightarrow \M_A/\theta$.  By the \emph{isomorphism theorem} for magmas, \cite[p.~47]{Bourbaki1989Algebra}, we have $\ker \pi= \theta$. Applying Lemma~\ref{LemaEquiClosed}, we get the equivalence $(1) \Leftrightarrow (2)$.

The equivalence $(2) \Leftrightarrow (3)$ follows from Lemma~\ref{LemaClosedMagmas} and from the fact that congruences are submagmas of the free magma $\M_A^2$, with $\G(\M_A^2)=(A\times \M_A) \cup (\M_A \times A)$.
\end{proof}

\section{Presentations}
\label{Presentations}

Given a presentation of a group, the so-called \emph{Tietze transformations} allow to add and remove superfluous generators and relations without changing the group. 
Tietze transformations are possible also in magma theory. In addition in the category of equidec magmas there is an extra general transformation: we can substitute a relation by its generators in $\G(\M_A^2)$, since congruences on equidec magmas must be closed.  

Since the intersection of closed sumagmas is a closed submagma, it makes sense to define the least closed congruence. 

\begin{definition} Given a magma $M$, and a subset $X\subseteq M^2$, $\boxminus(X)$ denotes the least closed congruence on $M$ containing $X$. 
\end{definition}

\begin{remark} In general we have that $\Theta(X) \subseteq \boxminus(X)$. Notice that each magma contains at least a closed submagma, the same magma in itself. Hence, if $M^2$ does not have any propper closed submagma, then $M/\boxminus(X) \cong \I$ for any set $X\subseteq M^2$, since $\boxminus(X)=M^2$.   
In addition, by Theorem~\ref{CaracEquidec}, $M/\boxminus(X)$ is always an equidec magma.

If $M$ is equidec, the diagonal set $\Delta(M)$ is the least closed congruence. Thus, closed congruences on an equidec magma $M$ form a complete algebraic lattice with operations $\theta \cap \theta'$ and $\boxminus(\theta \cup \theta')$, for closed congruences $\theta,\theta'$, and bounds $\Delta(M)$ and $M^2$ (see \cite[p.~21]{sankappanavar1981course} or \cite[p.~48, 147]{davey2002introduction}).  
\end{remark}

\begin{definition} A \emph{presentation} is a pair $(A, R)$ where $R  \subseteq \M_A^2$. Elements in $A$ are called \emph{generators}; elements in $R$, are called \emph{relations}. We say that a presentation is \emph{finite} when the sets $A$ and $R$ are finite. We will write:
\begin{align*}
\langle A \mid R \rangle_\boxminus= \faktor{\M_A}{\boxminus(R)}.
\end{align*}
 We say that an equidec magma $M$ is \emph{finitely presented} iff there is a finite presentation $(A, R)$ such that $M\cong \langle A \mid R\rangle_\boxminus$. 
 
For clarity, we will write the pair $(A,R)$ for a presentation as $(A\mid R)$. As it is usual, to show specific examples, relations $(x,y) \in R$ will be written as $x \approx y$. In addition, since we are going to work only with equidec magmas, we will write $\langle A \mid R \rangle=\langle A \mid R \rangle_\boxminus$.

The free product of two equidec magmas with finite presentations $M=\langle A \mid R \rangle$, and $N=\langle B \mid S \rangle$ is defined as: 
\begin{align*}
M * N=\langle A \sqcup B \mid R \sqcup S \rangle.
\end{align*}
It can be proved that this product does not depend on the presentations. 
\end{definition}

\begin{example} We see some presentations. 
\begin{enumerate}
\item The trivial magma and the free magma have presentations:
\begin{align*}
\I= \langle a \mid a\approx a+a \rangle \,\,\, \mbox{ and } \,\,\, \M_A=\langle A \mid \emptyset \rangle,
\end{align*}
respectively. And as it could be expected:
\begin{align*}
\M_A *\M_B =\M_{A \sqcup B}.
\end{align*}
\item The simplest non-trivial equidec magmas are:
\begin{align*}
\mathcal{C}_{3_+}=\langle a \mid a\approx a+(a+a) \rangle, \,\,\,\, \mathcal{C}_{3_-}=\langle a \mid a\approx (a+a)+a \rangle.
\end{align*}
$\mathcal{C}_{3_+}$ and $\mathcal{C}_{3_-}$ are not isomorphic, but anti-isomorphic. See later Example~\ref{ExampCyclic} for the notation $\mathcal{C}_x$. 

\item Consider the submagma  $\langle \varepsilon, \beta^*\alpha \rangle \subset \Lan$. We have the unique non-trivial relation:
\begin{align*}
\varepsilon \oplus \beta^*\alpha= \alpha \varepsilon \cup \beta\beta^*\alpha=\beta^*\alpha. 
\end{align*}
Thus, this magma can be presented as:
\begin{align*}
\langle a, b \mid b\approx a+b\rangle.
\end{align*}

\item Let us see a free decomposition. Consider the trees $\circ, t \in \Tree$, where $\circ$ is the one-vertex tree, and $t$ is the infinite full binary tree. We have:
\begin{align*}
\langle \circ, t \rangle \cong \langle a,b \mid b\approx b+b \rangle \cong \M * \I.
\end{align*} 
That is, there is no non-trivial equation relating $\circ$ and $t$.
\item We saw in Example~\ref{ExampDiamond} that $\Nat_\diamond$ has only two non-trivial relations 
\begin{align*}
1=1 \diamond 1 \,\,\, \mbox{ and } \,\,\, 2=1\diamond 2.
\end{align*}
Hence,
\begin{align*}
\Nat_\diamond \cong \langle a,b \mid a\approx a+a, \,\,\, b\approx a+b \rangle.
\end{align*}
\end{enumerate}
\end{example}

\begin{notation} We introduce the following notations. 
Let $a \in A, y\in \M_A$. $f_{a\rightarrow y}: \M_A \longrightarrow \M_A$ denotes the homomorphism of free magmas defined over the generators as: 
\begin{align*}
f_{a\rightarrow y}(x)=\begin{cases} x & \mbox{ if } x\not=a,\\
y & \mbox{ if } x=a. \end{cases}
\end{align*}
We will write  $f^2_{a\rightarrow y}((x,y))=(f_{a\rightarrow y}(x),f_{a\rightarrow y}(y))$.  We abbreviate $\g (x)=g_{\M_A^2}(x)$ which, recall, yields the set of the necessary generators in order to generate $x$, see Definition~\ref{DefMinGen}. 
\end{notation}

\begin{definition} \label{DefiReduction} Given a finite presentation $(A,R)$, we say that it is \emph{firstly reducible} iff some of the following transformations is applicable. 
\begin{itemize}

\item[(I)] If there is some $(x,y) \in R$ such that $(x,y) \not \in \G(\M_A^2)$, then we set the presentation:
\begin{align*}
(A' \mid R')=\big(A \mid \g (R )\big).
\end{align*}

\item[(II)] If there are a pair of relations $(a,y), (a,y') \in R \cap (A\times (\M_A \setminus A))$, such that $y\not=y'$, then we set:
\begin{align*}
( A' \mid R' )= \big( A \mid R \setminus \{(a,y')\} \cup \g((y,y')) \big).
\end{align*}

\item[(III)] If there are a pair of relations $(x,b), (x',b) \in R \cap ((\M_A \setminus A) \times A)$, such that $x\not=x'$, then we set:
\begin{align*}
( A' \mid R' )= \big( A \mid R \setminus \{(x',b)\} \cup \g((x,x')) \big).
\end{align*}

\end{itemize}

We say that the presentation is \emph{secondly reducible} iff some of the following transformations is applicable. 

\begin{itemize}
\item[(IV)] If there is some $(x,y) \in R$ such that $x\not \in A$, then we set:
\begin{align*}
(A' \mid R')=( A \mid (R \setminus \{(x,y)\}) \cup \{(y,x)\}).
\end{align*}

\item[(V)] If there is some $a\in A$ such that $(a,y) \not\in R$ and $(a,a) \not \in R$ for all $y \in M_A$, then we set:
\begin{align*}
( A' \mid R' )= ( A \mid R \cup \{(a,a)\}).
\end{align*}

\item[(VI)] If there are a pair of relations $(a,y), (a',y) \in R \cap (A\times \M_A)$ with $a\not=a'$, then we set:
\begin{align*}
( A' \mid R' )= ( A \setminus \{a'\} \mid f_{a'\rightarrow a}^2 \big(R \setminus \{(a',y)\}\big) ).
\end{align*}

\item[(VII)] If there is some $(a,y) \in R \cap (A \times \M_A)$ such that $a \not \in g_{\M_A}(y)$, and $(a,y') \not \in  R$ for any other $y'\not=y$, then we set:
\begin{align*}
( A' \mid R' )=( A \setminus \{a\} \mid f_{a\rightarrow y}^2 \big(R \big)).
\end{align*}
\end{itemize}
When some transformation (I)-(III) is applied we write $( A \mid R)  \leadsto_1 ( A' \mid R' )$ and when some transformation (IV)-(VII) is applied, 
$( A \mid R)  \leadsto_2 ( A' \mid R' )$. And in general, we write $( A \mid R)  \leadsto ( A' \mid R' )$ when some transformation (I)-(VII) is applicable. 
We say that a presentation is \emph{irreducible} iff it is not firstly nor secondly reducible. 
\end{definition}

\begin{lemma} Let  $( A \mid R )$ and $(A'\mid R')$ be finite presentations. 
\begin{align*}
( A \mid R ) \leadsto ( A' \mid R' ) \implies \langle A \mid R \rangle \cong \langle A' \mid R' \rangle.
\end{align*}
\end{lemma}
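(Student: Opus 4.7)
The plan is to verify, for each of the seven transformations (I)--(VII) in Definition~\ref{DefiReduction}, that the presented magma is preserved up to isomorphism. Since $\langle A \mid R \rangle = \M_A / \boxminus(R)$, the transformations split naturally into two groups. For (I)--(V) the generator set $A$ is unchanged, so it suffices to prove the set-theoretic equality $\boxminus(R) = \boxminus(R')$ inside $\M_A^2$. For (VI)--(VII) the generator set strictly shrinks, so I must construct an explicit isomorphism between the two quotient magmas.

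For the first group, the arguments are direct applications of the closed-congruence formalism. In (I), closedness of $\boxminus(R)$ forces $\g(r) \subseteq \boxminus(R)$ for each $r \in R$ (the minimal generators of $r$ in $\M_A^2$ must lie in any closed submagma containing $r$), while $r \in \langle \g(r) \rangle$ gives the reverse inclusion. In (II), the relations $(a,y), (a,y') \in R$ together with the fact that $\boxminus(R)$ is an equivalence relation yield $(y,y') \in \boxminus(R)$ by transitivity; closedness then gives $\g((y,y')) \subseteq \boxminus(R)$, while conversely $(y,y')$ and $(a,y)$ together recover $(a,y')$ by transitivity. Case (III) is symmetric. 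Cases (IV) and (V) are immediate, since congruences are symmetric and contain the diagonal.

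For (VI) and (VII) I work with the substitution homomorphism $\phi = f_{a'\rightarrow a}$ (resp.\ $\phi = f_{a\rightarrow y}$), whose image lies in $\M_{A'}$ with $A' = A \setminus \{a'\}$ (resp.\ $A' = A \setminus \{a\}$, using the hypothesis $a \notin g_{\M_A}(y)$), together with the inclusion $\iota : \M_{A'} \hookrightarrow \M_A$. The crucial technical ingredient is the claim that $(\phi(x), x) \in \boxminus(R)$ for every $x \in \M_A$, proved by induction on the length of $x$: the base case uses $(a, a') \in \boxminus(R)$ obtained by transitivity from $(a, y)$ and $(a', y)$ in (VI), or $(a, y) \in R$ directly in (VII), while the inductive step is immediate because $\boxminus(R)$ is a congruence. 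From this it follows that $\phi^2(R) \subseteq \boxminus_{A'}(R')$ and $\iota^2(R') \subseteq \boxminus_A(R)$, where the $\boxminus_A, \boxminus_{A'}$ subscripts indicate the ambient free magma.

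The main obstacle is verifying well-definedness of the descended maps in (VI)--(VII). The clean way is to note that, for any magma homomorphism $h : M \to N$ and any closed congruence $\psi$ on $N$, the preimage $(h^2)^{-1}(\psi)$ is itself a closed congruence on $M$. Applied to $\phi^2$ and $\iota^2$, this reduces the verification to checking that the generators $R$ and $R'$ respectively map into the target closed congruence, which is covered by the substitution lemma above. Combined with $\phi \circ \iota = \id_{\M_{A'}}$ and $(\iota(\phi(x)), x) \in \boxminus(R)$ for all $x \in \M_A$ (another application of the substitution lemma), the maps $\tilde\phi$ and $\tilde\iota$ descended to the quotients are mutually inverse isomorphisms between $\langle A \mid R \rangle$ and $\langle A' \mid R' \rangle$, completing the proof.
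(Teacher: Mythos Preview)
Your proof is correct and follows the same case-by-case strategy as the paper's own proof, verifying each transformation (I)--(VII) separately. For (I)--(V) your arguments are essentially identical to the paper's: both show $\boxminus(R)=\boxminus(R')$ using closedness, the submagma property, and the equivalence-relation axioms of $\boxminus(R)$. For (VI)--(VII) the paper gives only an informal explanation (``we can remove the redundant generator and rewrite the relations''), whereas you supply the actual construction: the substitution map $\phi$ and the inclusion $\iota$, the inductive lemma $(\phi(x),x)\in\boxminus(R)$, the preimage argument for well-definedness, and the verification that $\tilde\phi$ and $\tilde\iota$ are mutual inverses. This is not a different route but a rigorous completion of what the paper only sketches, and your use of the general fact that $(h^2)^{-1}(\psi)$ is a closed congruence is a clean way to handle well-definedness that the paper does not make explicit.
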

\begin{proof} We check that transformations (I)-(VII) preserve the magma:
\begin{itemize}
\item[(I)] Let $(x,y) \in R$. A relation $(x,y)$ can be obtained by sums of its generators $\g((x,y))$. Hence, we have that:
\begin{align*}
(x,y) \in \boxminus(R) \iff \g((x,y))\subseteq \boxminus(R).
\end{align*}
For the direction $(\Rightarrow)$ we use that $\boxminus(R)$ is a closed congruence.  For the direction $(\Leftarrow)$ we use that $\boxminus(R)$ is a submagma. 
This means that $\boxminus(R)=\boxminus(\g(R))$, which implies:
$\langle A\mid R \rangle= \langle A \mid \g(R) \rangle = \langle A' \mid R' \rangle$.

\item[(II)] We notice that if $(a,y), (a,y') \in R$ then we can substitute the relation $(a,y')$ by $(y,y')$, since the operator $\boxminus$ will generate $(a,y')$ by the transitivity property. However, as in the case (I), we can substitute the relation $(y,y')$ by the set of relations $\g((y,y'))$, since $\boxminus(R)$ is a closed submagma. Hence, $\boxminus(R)=\boxminus(R')$. 

\item[(III)] this case is symmetric to case (II).

\item[(IV)] Since the operator $\boxminus(R)$ yields an equivalence relation, we can change any relation $(x,y)$ by the relation $(y,x)$ preserving the magma. 

\item[(V)] Since the operator $\boxminus(R)$ yields an equivalence relation, we can add diagonal elements $(a,a)$ preserving the magma. 

\item[(VI)] If we have a pair of relations $(a,y)$ and $(a',y)$ in $R \cap (A\times \M_A)$, then we have a redundant generator, say $a'$. We can remove it, but we have to rewrite all the relations substituting $a'$ by $a$ which is made by $f_{a'\rightarrow a}$. 

\item[(VII)] If $a \not \in g_{\M_A}(y)$, then $a$ does not ``participate'' in $y$. The relation $(a,y)$ just renames $y$ as $a$. It might be the case that there is another relation $(a,y')$, however this case is excluded in the condition of applicability. So we can remove it and rewrite the relations.  \qedhere 
\end{itemize}
\end{proof}

\begin{remark} Transformations (IV)-(VII) are valid for any magma, whereas transformations (I)-(III) work only for the quasivariety of equidec magmas.   
\end{remark}

\begin{lemma} \label{LemaIrred1} Let $(A \mid R)$ a finite presentation. Then, either the presentation is firstly irreducible, or there is a finite sequence of presentations such that:
\begin{align*}
( A \mid R ) \leadsto_1 ( A_1 \mid R_1 )  \leadsto_1 (A_2 \mid R_2) \leadsto_1 \cdots \leadsto_1  (A_n \mid R_n),
\end{align*}
where $( A_n \mid R_n)$ is firstly irreducible. 
\end{lemma}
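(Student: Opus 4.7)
The plan is to exhibit a reduction strategy with a well-founded termination measure. First I would observe that transformation (I) can be used at most once in a reduction sequence: after it fires, $R$ becomes $\g(R)\subseteq \G(\M_A^2)=(A\times \M_A)\cup(\M_A\times A)$ (invoking Theorem~\ref{TeoProduct} to identify $\M_A^2$ as a free magma on these indecomposables), so the condition triggering (I) is no longer met. Moreover, (II) and (III) only add pairs taken from $\g$-decompositions, hence they preserve $R\subseteq \G(\M_A^2)$. Therefore I may apply (I) first (if applicable) and then restrict attention to showing termination of repeated applications of (II) and (III) from a presentation with $R\subseteq \G(\M_A^2)$.

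For the ensuing phase I would assign to each relation $p=(x,y)\in R$ the weight $\phi(p)=\max(\ell(x),\ell(y))$ and to a presentation the finite multiset $\Phi(R)=\{\phi(p):p\in R\}$ of positive integers, ordered by the Dershowitz--Manna multiset ordering, which is well-founded. The key combinatorial fact is the following length bound: if $y,y'\in\M_A\setminus A$, then every $(u,v)\in \g((y,y'))$ satisfies
\[
\max(\ell(u),\ell(v))\le \max(\ell(y),\ell(y'))-1.
\]
This holds because a pair in $\M_A^2$ is decomposable iff both coordinates are, so $(y,y')$ splits as $(y_1,y'_1)+(y_2,y'_2)$ with $y=y_1+y_2$, $y'=y'_1+y'_2$; iterating this parallel split until one coordinate lands in $A$ produces every leaf of the decomposition as a pair whose two entries are proper subterms of $y$ and of $y'$ respectively, hence each has length at most $\ell(y)-1$ and $\ell(y')-1$.

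Now I would prescribe the selection rule: whenever (II) is applicable to a pair $(a,y),(a,y')$, name the elements so that $\ell(y')\ge \ell(y)$ and discard $(a,y')$; symmetrically for (III). Under this choice, the move removes one occurrence of $\ell(y')$ from $\Phi(R)$ and adjoins at most the multiset $\{\phi((u,v)):(u,v)\in \g((y,y'))\}$, every entry of which is $\le \max(\ell(y),\ell(y'))-1=\ell(y')-1<\ell(y')$. The fact that $R$ is a set rather than a multiset only helps: if some of the added pairs already lie in $R$, the set union absorbs them, giving a smaller added multiset but leaving the Dershowitz--Manna inequality intact. Thus each (II) or (III) step yields $\Phi(R')<\Phi(R)$ in the well-founded multiset order, so the process must terminate; and the terminal presentation falsifies the triggering conditions of all three rules, i.e., it is firstly irreducible.

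The main delicate step is the combinatorial length bound for elements of $\g((y,y'))$; it is exactly here that the hypothesis $y,y'\in\M_A\setminus A$ is used, ruling out the degenerate leaf $(y,y')$ itself and forcing at least one split of both coordinates. Once that bound is established, the rest is a routine application of the Dershowitz--Manna well-foundedness, and the two cases (II) and (III) are symmetric so only one needs to be written out in full.
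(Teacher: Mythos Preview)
Your proof is correct. Both you and the paper base termination on the weights $\phi((x,y))=\max(\ell(x),\ell(y))$, but the termination arguments differ. The paper tracks the single scalar $\alpha(R)=\max_{(x,y)\in R}\phi((x,y))$ and asserts that each application of (I)--(III) strictly decreases $\alpha$. You instead first isolate (I), observing that it can fire at most once and that (II) and (III) preserve the invariant $R\subseteq\G(\M_A^2)$; you then control (II) and (III) with the multiset $\Phi(R)=\{\phi(p):p\in R\}$ under the Dershowitz--Manna order, together with the selection rule of always discarding the relation of larger weight. Your version is the more careful one: the scalar $\alpha$ need not strictly decrease under (II) or (III) when some other relation in $R$ already realises the maximum weight (for instance, if $R=\{(a,y),(a,y')\}$ with $\ell(y)=\ell(y')$, then after (II) the surviving pair $(a,y)$ keeps $\alpha$ unchanged). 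The multiset ordering absorbs precisely this difficulty, since one occurrence of the top weight is removed and replaced by finitely many strictly smaller values. The price you pay is a heavier well-foundedness tool and the need for a selection rule; the paper's argument is lighter in intent but, as literally written, would need a patch of exactly the kind your multiset measure supplies.
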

\begin{proof} Given a set of relations $R$ we define the number: 
\begin{align*}
\alpha(R)=\max\{ \max\{ \ell(x), \ell(y)\} \mid (x,y) \in R \}.
\end{align*}
We prove that if some transformation (I)-(III) is be applied, then the number $\alpha(R)$ decreases. First we notice that if $(x,y) \not \in \G(\M_A^2)$, then:
\begin{align*}
\alpha \big( \g( (x,y)) \big)<\alpha \big(\{(x,y)\} \big).
\end{align*}
 
\begin{itemize}
\item[(I)] By the remark above, we have that $\alpha\big(\g(R)\big)<\alpha(R)$.
\item[(II)] We have the inequalities:
\begin{align*}
\alpha(R') &= \alpha \big( (R \setminus \{(a,y')\}) \cup \g((y,y')) \big)\\
& < \alpha \big( (R \setminus \{(a,y')\}) \cup \{ (y,y')\} \big) \\
& = \alpha(R).
\end{align*}   
In the second line we have used the condition of applicability according which $a \in A$ and $y,y' \not \in A$. Thus, $(y,y') \not \in \G(\M_A^2)$, whereby, by the above comment: $\alpha \big( \g( (y,y')) \big)<\alpha \big(\{(y,y')\} \big)$. Then: 
\begin{align*}
\alpha \big( \, (R \setminus \{(a,y')\} ) \cup \g((y,y')) \, \big) < \alpha \big(  \, (R \setminus \{(a,y')\}) \cup \{ (y,y')\} \, \big).
\end{align*}
In the last line we have used that $\ell(a)=1$, and that $(a,y),(a,y') \in R$. Hence:
\begin{align*}
\alpha \big( \{(y,y')\}\big)=\alpha \big(\{(a,y), (y,y')\} \big)=\alpha \big(\{(a,y), (a,y')\} \big),
\end{align*}
and then:
\begin{align*}
\alpha \big( (R \setminus \{(a,y')\}) \cup \{ (y,y')\} \big) = \alpha(R).
\end{align*}
\item [(III)] Similar to case (II).
\end{itemize}
Since the number $\alpha$ decreases, there are only a finite number of transformations (I)-(III) that can be applied. Therefore, the presentation $P$ can be reduced to a presentation that is firstly irreducible.  
\end{proof}

\begin{lemma} \label{LemaIrred2} Let $(A \mid R)$ a finite presentation firstly irreducible. Then, either the presentation is secondly irreducible, or there is a finite number of presentations such that:
\begin{align*}
( A \mid R ) \leadsto_2 ( A_1 \mid R_1 )  \leadsto_2 ( A_2 \mid R_2 ) \leadsto_2 \cdots \leadsto_2  ( A_n \mid R_n ),
\end{align*}
where $(A_n \mid R_n)$ is firstly and secondly irreducible, that is, it is irreducible. 
\end{lemma}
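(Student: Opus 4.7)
The plan is to exhibit a well-founded numerical measure on finite presentations that strictly decreases under each application of (IV)--(VII), forcing any $\leadsto_2$-chain starting from $(A \mid R)$ to be finite, and then to verify that the terminal presentation is firstly and secondly irreducible.

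The measure I would use is $\mu(A, R) = (|A| + d(A, R),\, e(A, R))$ ordered lexicographically on $\Nat \times \Nat$, where $d(A, R)$ counts the generators $a \in A$ with no outgoing relation $(a, \cdot) \in R$ and $e(A, R)$ counts the relations $(x, y) \in R$ with $x \not\in A$. Checking strict decrease is a short inspection: transformation (V) drops $d$ by one since it adds a diagonal $(a, a)$ where $a$ had no prior outgoing relation; transformations (VI) and (VII) both drop $|A|$ by one, and a direct computation shows that the homomorphic substitutions $f^2_{a' \to a}$ and $f^2_{a \to y}$ preserve the existence of an outgoing relation on each surviving generator, so $d$ cannot increase. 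For (IV), first-irreducibility combined with condition (I) forces the swapped $(x, y)$ with $x \not\in A$ to satisfy $y \in A$, so $|A| + d$ is weakly decreasing, while $e$ strictly drops by one; the lexicographic order then strictly decreases. Since the lex order on $\Nat^2$ is well-founded, the $\leadsto_2$-chain must terminate in some $(A_n \mid R_n)$ to which none of (IV)--(VII) applies.

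Finally I would verify that the terminal presentation is also firstly irreducible. For (V), (VI), (VII) a case-by-case inspection of conditions (I), (II), (III) shows that none of the forbidden patterns can be newly created: the added diagonal $(a, a)$ has both sides in $A$, and the substitutions are homomorphisms that preserve membership in $\G(\M_A^2)$. The delicate case is (IV): swapping $(x, y) \mapsto (y, x)$ can in principle create a pair $(y, x), (y, z)$ with $y \in A$ and $x, z \not\in A$, triggering condition (II). The main obstacle of the proof is precisely this step, and the most transparent way to handle it is to interleave with Lemma~\ref{LemaIrred1}: whenever an application of $\leadsto_2$ breaks first-irreducibility, follow it by a short $\leadsto_1$-chain to restore it. This interleaved process still terminates, since the combined lexicographic measure $(|A| + d,\, \alpha(R),\, e(R))$, with $\alpha$ the length bound from the proof of Lemma~\ref{LemaIrred1}, strictly decreases under every transformation (I)--(VII), yielding an irreducible presentation.
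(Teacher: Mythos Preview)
Your overall plan matches the paper's: exhibit a well-founded measure that drops under each $\leadsto_2$ step, then argue that the terminal presentation is also firstly irreducible. The paper uses the single sum $\beta(P)=|A|+|R|+|R\cap(\M_A\times A)|+|\Delta(A)\setminus R|$ rather than your lexicographic pair $(|A|+d,e)$, and then asserts directly that each of (IV)--(VII) preserves first-irreducibility, with no interleaving. You are right to be suspicious of the (IV) case --- the paper's one-line justification there is thin --- but your proposed repair has two genuine gaps.

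First, your claim that (V), (VI), (VII) cannot create first-reducibility patterns fails for (VII). The map $f_{a\to y}$ is a homomorphism $\M_A\to\M_{A'}$ with $A'=A\setminus\{a\}$, but it need not carry $\G(\M_A^2)$ into $\G(\M_{A'}^2)$ when $y\notin A$: a relation $(u,a)\in R$ with $u\notin A$ lies in $\G(\M_A^2)$ because its second coordinate is a generator, yet its image $(f(u),y)$ has neither coordinate in $A'$, so condition~(I) becomes applicable. Concretely, $A=\{a,b,c\}$, $R=\{(a,b{+}c),\,(c{+}c,a)\}$ is first-irreducible, (VII) applies to $(a,b{+}c)$, and the result $\{(b{+}c,b{+}c),\,(c{+}c,b{+}c)\}$ over $A'=\{b,c\}$ is not.

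Second, the interleaving with $\leadsto_1$ proves a different statement than the lemma, which asks for a pure $\leadsto_2$-chain. Even granting that change of target, the combined measure $(|A|+d,\alpha,e)$ does not strictly decrease under every step: under (II) with $A=\{a\}$ and $R=\{(a,a{+}(a{+}a)),\,(a,a{+}a)\}$ one computes $(1,3,0)$ before and $(1,3,1)$ after, since $\g((y,y'))$ contributes the pair $(a{+}a,a)$ with non-generator first coordinate while $(a,a{+}(a{+}a))$ keeps $\alpha$ at $3$. Note also that your termination argument for the pure $\leadsto_2$-chain relies on first-irreducibility to force $y\in A$ in rule (IV); once preservation is lost, (IV) can swap a pair $(x,y)$ with $x,y\notin A$ back and forth indefinitely, so the decrease of $e$ is no longer guaranteed and the two halves of the argument become circular.
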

\begin{proof} Given a presentation $P=( A \mid R )$ we define the numbers:
\begin{align*}
\beta(P)&=\beta_1(P)+\beta_2(P)+\beta_3(P)+\beta_4(P),\\
\beta_1(P)&=|A|,\\
\beta_2(P)&=|R|,\\
\beta_3(P)&=|R\cap ( \M_A \times A)|,\\
\beta_4(P)&=|\Delta(A) \setminus (\Delta(A)\cap R)|.
\end{align*}
$\beta_1$ counts the number of generators; $\beta_2$ counts the number of relations; $\beta_3$ counts the number of relations of the form $(x,a)$ with $a \in A$; $\beta_4$ counts the number of relations that are not in the diagonal.    

We see that if $P \leadsto_2 P'$ for some transformation (IV)-(VII), then the number $\beta$ decreases. We only specify those numbers that change, the rest of numbers remain equal. 
\begin{itemize}
\item[(IV)] $\alpha_3(P')=\alpha_3(P)-1$.  
\item[(V)] $\alpha_4(P')=\alpha_4(P)-1$.  
\item[(VI)] $\alpha_1(P')=\alpha_1(P)-1$, $\alpha_2(P')=\alpha_2(P)-1$. 
\item[(VII)] $\alpha_1(P')=\alpha_1(P)-1$.  
\end{itemize}
Thus, $\beta(P')<\beta(P)$. Since the number $\beta$ always decreases, there are only a finite number of transformations (IV)-(VII) that can be applied. Therefore, the presentation $P$ can be reduced to a presentation secondly irreducible. 
 
Finally we need to see that the presentation is still firstly irreducible. 
Let $P=(A\mid R)$ and $P'=(A'\mid R')$ presentations. We prove that if $P$ is firstly irreducible and $P\leadsto_2 P'$, then $P'$ is still firstly irreducible, or what is the same, no transformation (I)-(III) is applicable.
 
On the one hand, since $P$ is firstly irreducible, transformation (I) is not applicable, which is equivalent to say that $R \subseteq \G(\M_A^2)$. This is because transformations (IV)-(VII) reverse coordinates, add diagonal elements, remove relations or rewrite generators. Therefore, $R' \subseteq \G(\M_A^2)$, whereby (I) is not applicable on $P'$. 

On the other hand, transformation (II) is not applicable iff given $a \in A$, if $(a,y), (a,y') \in R$, then $y=y'$. Transformation (IV) does not affect this condition since it requires that $a \not \in A$. Transformation (V) just introduces a diagonal element, which does not affect the condition. Transformations (VI) and (VII) rewrite a generator, whereby the condition holds. 

Finally we notice that if a presentation is secondly irreducible, the transformation (III) cannot be applied because all the relations are in $A \times \M_A$. 
\end{proof}

\section{$\E$-magmas}
\label{ISoCharac}

\begin{definition} Let $\varphi: A \longrightarrow \M_A$ be a  mapping. We say that $\varphi$ is in \emph{E-form} iff
$\varphi$ is injective and $a \in g_{\M_A}(\varphi(a))$ for each $a\in A$, and then  we write:
\begin{align*}\E(\varphi)= \faktor{\M_A}{ \boxminus(\varphi)}.
\end{align*}
We call these magmas $\E$-\emph{magmas}.  
\end{definition}

\begin{theorem} \label{TheoremCharactFPmagmas} $\E$-magmas are equidec.  Each finitely presented equidec magma is an $\E$-magma. 
\end{theorem}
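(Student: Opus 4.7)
The first assertion is a corollary of Theorem~\ref{CaracEquidec}. Interpreting $\varphi$ as the set of pairs $\{(a,\varphi(a)) : a\in A\}\subseteq \M_A^2$, the relation $\boxminus(\varphi)$ is by construction the least closed congruence on $\M_A$ containing this set, and in particular it is closed. Theorem~\ref{CaracEquidec} then yields that $\E(\varphi)=\M_A/\boxminus(\varphi)$ is equidec.

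For the converse, the plan is to start from any finite presentation $M\cong \langle A\mid R\rangle$ and iteratively apply Lemma~\ref{LemaIrred1} followed by Lemma~\ref{LemaIrred2} to reduce $(A,R)$ to an irreducible presentation $(A_0,R_0)$ with $M\cong \langle A_0\mid R_0\rangle$. From this I would read off an $\E$-form mapping $\varphi:A_0\to\M_{A_0}$, exploiting the structural constraints supplied by irreducibility: non-applicability of (IV) forces $R_0\subseteq A_0\times\M_{A_0}$; of (V), every $a\in A_0$ appears as a first coordinate of some relation; of (VI), distinct generators carry distinct right coordinates in $R_0$; of (VII), whenever $(a,y)\in R_0$ is the unique relation for $a$ we have $a\in g_{\M_{A_0}}(y)$; and of (II), at most one relation per generator has right coordinate outside $A_0$. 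These conditions jointly suggest setting $\varphi(a)$ to be the essential right coordinate attached to $a$.

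The hardest step will be pinning down ``essential'' when a single generator $a$ participates in several relations $(a,y_1),(a,y_2),\ldots$ in $R_0$. By (II) at most one of the $y_i$ lies outside $A_0$; the remaining $y_i$ must be elements of $A_0$ distinct from $a$. The tactic I would use is to adjoin the diagonal pairs $(a',a')$ for the relevant generators $a'$ (these are free in any closed congruence) and then reapply (VI) to identify generators that share a right coordinate, together with (VII) to eliminate any generator left with a single non-self-referential relation, iterating until every surviving generator carries a single essential relation plus possibly its diagonal copy. Once $\varphi$ is well-defined, injectivity follows from (VI), the condition $a\in g_{\M_{A_0}}(\varphi(a))$ follows from (VII) (with $\varphi(a)=a$ handled by the trivial inclusion $a\in g_{\M_{A_0}}(a)=\{a\}$), and the equality $\boxminus(\{(a,\varphi(a)) : a\in A_0\})=\boxminus(R_0)$ follows from the redundancy of diagonal relations inside any closed congruence, yielding $\E(\varphi)\cong \langle A_0\mid R_0\rangle\cong M$.
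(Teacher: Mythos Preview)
Your approach mirrors the paper's: the first assertion follows from the closedness of $\boxminus(\varphi)$ (the paper states this as ``trivial'' rather than citing Theorem~\ref{CaracEquidec}), and the second is obtained by reducing an arbitrary finite presentation to an irreducible one via Lemmas~\ref{LemaIrred1} and~\ref{LemaIrred2} and then reading off the E-form map from the non-applicability of (I)--(VII).

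The one point of divergence is the passage from an irreducible $R_0$ to an actual function. The paper asserts directly that irreducibility forces $R_0$ to be a mapping in E-form, claiming in particular that the only relations lying in $A_0\times A_0$ are diagonal. You are more cautious: you anticipate that a single generator $a$ might still carry several relations---one $(a,y)$ with $y\notin A_0$ together with pairs $(a,b)$, $b\in A_0$---and propose an extra pass (adjoin $(b,b)$, then reapply (VI) to merge $a$ with $b$, then (VII)) to eliminate them. Your caution is justified, since non-diagonal pairs in $A_0\times A_0$ are not transparently excluded by the non-applicability conditions alone, and your fix is sound: adjoining $(b,b)$ does not change $\boxminus(R_0)$, it enables (VI), and each merge strictly decreases $|A_0|$, so the iteration terminates. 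After that, non-applicability of (II) and (VI) give injectivity, and non-applicability of (VII) gives $a\in g_{\M_{A_0}}(\varphi(a))$, exactly as you outline.
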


\begin{example} This theorem synthesizes the last Section~\ref{Presentations}: an irreducible presentation defines a mapping in $E$-form. Let us see an example. Consider the following transformations over the presentation of an equidec magma $M$:
\begin{align*}
 & \big( a,b,c,d \mid (c+d)+(a+c) \approx (a+a)+a  \big)\\
\stackrel{\mbox{\footnotesize (I)}}{ \leadsto}  \,\,\, & \big( a,b,c,d \mid c \approx a, \,\, d \approx a, \,\, a+c \approx a  \big)\\
\stackrel{\mbox{\footnotesize (IV)}}{ \leadsto}  \,\,\, & \big( a,b,c,d \mid c \approx a, \,\, d \approx a, \,\, a \approx a+c  \big)\\
\stackrel{\mbox{\footnotesize (V)}}{ \leadsto}  \,\,\, & \big( a,b,c,d \mid c \approx a, \,\, d \approx a, \,\, a+c \approx a, \,\, b \approx b  \big)\\
\stackrel{\mbox{\footnotesize (VII)}}{ \leadsto}  \,\,\, & \big( a,b,d \mid  d \approx a, \,\, a+a \approx a, \,\, b \approx b  \big)\\
\stackrel{\mbox{\footnotesize (VII)}}{ \leadsto}  \,\,\, & \big( a,b \mid a \approx a+a, \,\, b \approx b  \big).
\end{align*}
The relations in the last presentation form a injective mapping $\varphi: \{a,b\} \longrightarrow \M_{\{a,b\}}$, $\varphi(a)=a +a$, $\varphi(b)=b$.
Hence:
\begin{align*}
M\cong \E(\varphi)=\langle a,b \mid a \approx a+a, \,\, b \approx b\rangle \cong \I *\M. 
\end{align*}
\end{example}

\begin{proof}[Proof of Theorem~\ref{TheoremCharactFPmagmas}.]  The first statement is trivial, since the operator $\boxminus$ yields a closed congruence. 
For the second statement, consider a finitely presented magma $M$. That is, $M=\langle A \mid R\rangle$ for some finite presentation $(A \mid R)$. 
By Lemma~\ref{LemaIrred1} and Lemma~\ref{LemaIrred2} we can assume that the presentation is irreducible, otherwise we reduce the presentation.  

Since the presentation is irreducible, no transformation (I)-(VII) in Definition~\ref{DefiReduction} is applicable. 
Since (I) is not applicable, all the elements of $R$ are in $\G(\M_A^2)$. 
Since (V) is not applicable, $R \subseteq A\times \M_A$. Since (VII) is not applicable, we have that $R$ defines a mapping, maybe partial, $R: A \longrightarrow  \M_A$. Since (VI) is not applicable, the mapping must be total. 
Since (VII) is not applicable, we have that $a \in g_{\M_A}(R(a))$. 
In addition, the unique relations in $A \times A$ are the diagonal elements. 
Since (II) is not applicable, for each $a\in A, y,y' \in \M_A \setminus A$, we have that $(a,y), (a,y') \in R$, implies $y=y'$. Since the unique relations in $A\times A$ are the diagonal elements, the last statement can be extended to any $y,y' \in \M_A$. 
Therefore, the mapping is injective. 
In sum, if $(A \mid R)$ is irreducible, then $R$ is a mapping in E-form. 
\end{proof}

\begin{example} \label{FreeFullEmagmas} Free magmas occur when the mapping $\varphi: A \longrightarrow \M_A$ is the identity $\E(\Delta(A))\cong \M_A$. Actually, the initial part of an $\E$-magma can be calculated as:
\begin{align*}
\Ini \big( \E(\varphi) \big)=\E\big(\varphi \cap \Delta( A) \big).
\end{align*}
In particular, by Theorem~\ref{TheoSplit}, $\E(\varphi)$ is full when $\varphi \cap \Delta(A)=\emptyset$ and it is free when $\varphi \cap \Delta(A)=\varphi$.
\end{example}

\begin{example} \label{PropositionProducts} The free product of two $\E$-magmas is easy to calculate. Let $\varphi: A \longrightarrow$ and $\psi: B \longrightarrow \M_B$ be mappings in E-form. The set $\varphi \sqcup \psi$ defines a mapping in E-form:
\begin{align*}
\big(\varphi \sqcup \psi \big) (x)=\begin{cases} \varphi(x) \mbox{ if } x\in A,\\ 
\psi(x) \mbox{ if } x\in B. \end{cases}
\end{align*} 
Then, we have:
\begin{align*}
\E(\varphi) * \E(\psi) \cong \E(\varphi \sqcup \psi),
\end{align*}
which states that the free product of $\E$-magmas is an $\E$-magma.
\end{example}

\begin{remark} $\E$-magmas form a large class of equidec magmas. 
Although the direct product of two equidec magmas is equidec, we cannot ensure here that the direct product of two $\E$-magmas is an $\E$-magma. That is, we do not know if there is a mapping in E-form, say $\phi$, for which $\E(\varphi)\times \E(\psi) \cong \E(\phi)$, beyond some trivial cases. The difficulty reflects the fact that the direct product is not in general finitely generated, recall Theorem~\ref{TeoProduct}.
\end{remark}

\begin{example} \label{ExempNonEmagma} Let us consider an example of equidec magma but possibly non $\E$-magma. Let the alphabet be $\Sigma=\{p,q\}$. The following magma:
$$\langle \Sigma^* \mid x\approx px+qx, \,\, x \in \Sigma^* \rangle$$
is a non-finitely represented equidec full magma. The set of relations defines a injective mapping $\Sigma^* \longrightarrow \M_{\Sigma^*}$, but it is not in $E$-form since $x \not \in g_{M_{\Sigma^*}}(\varphi(x))$. 
This presentation has an interesting property: every generator is superfluous. We can remove any generator $x$ since it can be generated by $px$ and $qx$. However, we cannot remove an infinite set of generators. 

When we consider its J\'onsson-Tarski algebra (recall Example~\ref{JonssonTarskiAlg}), the operations $p,q$ for the first and second component are $p(x)=px$ and $q(x)=qx$, for each element $x$ of the magma. This structure is isomorphic to the cyclic free J\'onsson-Tarski algebra which is generated by the empty word $\varepsilon$. 
\end{example}

\begin{lemma} \label{LemaRank} Consider the equidec magma $\E(\varphi)$ where $\varphi: A \longrightarrow \M_A$.The projection $\pi:A \longrightarrow \pi(A)$ is a bijection. 
\end{lemma}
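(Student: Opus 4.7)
\emph{Plan.} Surjectivity of $\pi|_A \colon A \to \pi(A)$ is tautological; only injectivity is at stake. I would argue by contradiction: assume $\pi(a)=\pi(a')$ for distinct $a,a'\in A$, equivalently $(a,a')\in\boxminus(\varphi)$, and aim for a contradiction.

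The first move is to reduce the closed congruence to an ordinary one by showing $\boxminus(\varphi)=\Theta(\varphi)$, where $\Theta(\varphi)$ is the least congruence on $\M_A$ containing $\varphi$. Only the closedness of $\Theta(\varphi)$ requires argument. Viewing pairs in $\Theta(\varphi)$ as rewriting chains of elementary steps $b\leftrightarrow\varphi(b)$ applied at arbitrary positions, and exploiting that $\M_A^2$ is itself a free magma with unique decompositions (Theorem~\ref{TeoProduct}), one reorganises any chain between two composite terms so that no step is applied at the root of the full term. This is always possible because a root rewrite can only produce a leaf $b$, and from that leaf the sole available move $b\to\varphi(b)$ simply restores the composite term one just came from; consecutive root rewrites therefore cancel and can be excised. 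Once all root rewrites are removed, the chain splits coordinate-wise across the root $+$, which is precisely the closedness condition.

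Now apply this to the presumed chain $a\leftrightarrow^*_\varphi a'$. Because $a$ is a leaf, the first step must be the root rewrite $a\to\varphi(a)$; this is trivial if $\varphi(a)=a$ and forces $a'=a$ in that case, so we may assume $\varphi(a)\neq a$ and symmetrically $\varphi(a')\neq a'$. The chain then has the shape $a\to\varphi(a)\leftrightarrow^*\varphi(a')\to a'$, and by the preceding paragraph applied to the middle segment we may assume no intermediate step is at the root, whence splitting yields
\begin{equation*}
(\varphi(a)_L,\varphi(a')_L),\ (\varphi(a)_R,\varphi(a')_R)\in\Theta(\varphi),
\end{equation*}
where $L$ and $R$ denote the left and right subterms of $\varphi(a)$ and $\varphi(a')$.

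The remaining step---and the main obstacle---is to leverage the E-form condition $a\in g_{\M_A}(\varphi(a))$, which asserts that the leaf $a$ occurs at some position $p$ inside the tree $\varphi(a)$, and the analogous statement for $a'$ inside $\varphi(a')$. Iterating the splitting above along the path $p$, and applying the no-root-rewrite reorganisation at every level, yields a $\Theta(\varphi)$-relation between the leaf $a$ and the subterm of $\varphi(a')$ sitting at the same position $p$. Comparing this with the location of $a'$ inside $\varphi(a')$ and invoking the injectivity of $\varphi$ one aims to deduce $\varphi(a)=\varphi(a')$, hence $a=a'$, the desired contradiction. Making this rigorous requires a well-founded induction (for example on $\ell(\varphi(a))+\ell(\varphi(a'))$) together with a careful case analysis of the possible relative positions of $a$ and $a'$ inside their respective $\varphi$-images, ensuring that the recursion terminates in a genuine structural identification rather than descending indefinitely.
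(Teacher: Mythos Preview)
Your first step---showing that $\Theta(\varphi)$ is already closed, so that $\boxminus(\varphi)=\Theta(\varphi)$---is a genuine and correct observation, and the cancellation-of-root-rewrites argument you sketch does work: from a composite term the only root rewrite lands on a leaf $b\in A$, and by E-form the only move out of $b$ is back to $\varphi(b)$, so root excursions in a chain between two composite terms can be excised. The paper does not isolate this fact.

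The gap is in your second step. After splitting you obtain $(\varphi(a)_L,\varphi(a')_L)\in\Theta(\varphi)$ and the analogous $R$-relation, and you propose to iterate along the path $p$ to the occurrence of $a$ inside $\varphi(a)$, eventually invoking injectivity of $\varphi$. This does not close. First, nothing forces $\varphi(a')$ to have the same shape as $\varphi(a)$ along $p$: as soon as the $\varphi(a')$-side becomes a leaf $c\in A$ while the $\varphi(a)$-side is still composite, splitting halts and you are left with a relation $(u,c)\in\Theta(\varphi)$ that you have no inductive handle on. Second, even if you do reach the leaf $a$ on the left, you only obtain $(a,w)\in\Theta(\varphi)$ for some subterm $w$, and there is no mechanism linking $w$ to $a'$; the positions of $a$ in $\varphi(a)$ and of $a'$ in $\varphi(a')$ are entirely unrelated. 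Third, the quantity $\ell(\varphi(a))+\ell(\varphi(a'))$ you propose for the well-founded induction does not decrease under the recursion you describe: once you pass to $(a,w)$ you must unfold $a\to\varphi(a)$ again, and $\ell(\varphi(a))$ reappears unchanged. A workable induction would have to be on something like the length of a shortest rewriting chain, combined with a stronger inductive statement (for instance, that any term $\Theta$-related to $a$ must contain $a$ as a leaf), and you have not supplied that.

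By contrast, the paper avoids the rewriting analysis entirely: it argues that an indecomposable pair in $\boxminus(\varphi)$---in particular any $(a,b)$ with $a,b\in A$---already lies in the bare equivalence closure $\Xi(\varphi)$, and then computes $\Xi(\varphi)=\Delta(A)\cup\Delta(\varphi(A))\cup\varphi\cup\overline{\varphi}$ directly from injectivity and the E-form condition $\varphi(a)\in A\Rightarrow\varphi(a)=a$, whence $(a,b)\in\Xi(\varphi)$ with $a,b\in A$ forces $a=b$.
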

\begin{proof} First we notice that given a subset $X \subseteq \M_A^2$, any element in $\boxminus(X) \setminus \Xi(X)$ is always decomposable. If $x \in \boxminus(X)$ is indecomposable, then the unique ``part'' of the operator $\boxminus$ that could contribute to generate $x$ is $\Xi(X) \subseteq \boxminus (X)$.  

Let us suppose that $\pi(a)=\pi(b)$ for some $a,b \in A$, that is, $(a,b) \in \Theta(\varphi)$. $(a,b)$ is clearly indecomposable, therefore, by the above comment,
$(a,b) \in \Xi(X)$. We have that for any $x\in A$:
\begin{align*}
\Xi \big( (x,\varphi(x) \big)=\{(x,x),(\varphi(x),\varphi(x)), (x,\varphi(x)), (\varphi(x),x)\}.
\end{align*}
In addition, given $x\in A$, we have that $\varphi(x) \in A$ iff $\varphi(x)=x$, since $x\in g_{\M_A}(\varphi(x))$. Then, 
we can calculate $\Xi(\varphi)$ as:
\begin{align*}
\Xi(\varphi)=\Delta(A) \cup \Delta(\varphi(A)) \cup \varphi \cup \overline{\varphi},
\end{align*}
where $\overline{\varphi}=\{ (y,x) \mid (x,y) \in \varphi\}$. And now it is easy to check that if $a\not= b$, then $(a,b)$ is not in any of those subsets. Thus, $\pi:A \longrightarrow \pi(A)$ is injective, and therefore bijective.  
\end{proof}

\begin{definition} Let $\varphi, \psi$ be mappings $\varphi: A \longrightarrow \M_A$ and $\varphi: B \longrightarrow \M_B$. We say that $\varphi$ and $\psi$ are \emph{conjugated} iff there is an isomorphism of free magmas $g: \M_A \longrightarrow \M_B$ that makes commutative the diagram:
\begin{align*}
\xymatrix{A \ar[d]_\varphi \ar[r]^g & B \ar[d]^\psi  \\
\M_A  \ar[r]_g & \M_B}
\end{align*} 
\end{definition}

\begin{theorem} $\E(\varphi)\cong \E(\psi)$ iff $\varphi, \psi$ are conjugated.
\end{theorem}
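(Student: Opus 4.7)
The plan is to prove both directions; $(\Leftarrow)$ is routine, while $(\Rightarrow)$ is the substantive one. For $(\Leftarrow)$, given a free-magma iso $g\colon \M_A\to \M_B$ with $g\circ\varphi=\psi\circ g|_A$, the product $g\times g\colon \M_A^2\to \M_B^2$ sends $\varphi$ bijectively onto $\psi$ as subsets of $\M_B^2$. Since an iso of free magmas maps closed submagmas to closed submagmas (Lemma~\ref{LemaClosedMagmas}) and $\boxminus$ is by definition the least closed congruence containing a given set, one has $(g\times g)(\boxminus(\varphi))=\boxminus(\psi)$, so $g$ descends to an iso $\E(\varphi)\cong \E(\psi)$.

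For $(\Rightarrow)$, take an iso $F\colon \E(\varphi)\to \E(\psi)$. I would first characterize $\pi_A(A)$ intrinsically inside $\E(\varphi)$ as the set of \emph{essential} elements, namely those $m$ with $m\notin \langle \E(\varphi)\setminus\{m\}\rangle$. Indecomposable $\pi_A(a)$ (with $\varphi(a)=a$) are essential by Remark~\ref{ReIndecomposable}; for $\pi_A(a)$ with $\varphi(a)\neq a$, I would suppose a finite polynomial $P$ of minimum length expressing $\pi_A(a)$ in $\E(\varphi)\setminus\{\pi_A(a)\}$, split $P=P_1+P_2$ at the top, match via equidecomposability the top split with the unique decomposition $\pi_A(a)=\pi_A(u)+\pi_A(v)$ coming from $\varphi(a)=u+v$, and then descend along the branch of $\varphi(a)$'s decomposition tree that contains the letter $a$ (available because $\varphi$ is in E-form) until arriving at a strictly shorter expression of $\pi_A(a)$, contradicting minimality. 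Since $\pi_A(A)$ already generates $\E(\varphi)$, any element outside $\pi_A(A)$ is not essential, giving the converse. Essentiality is preserved by $F$, so $F$ restricts to a bijection $\pi_A(A)\to\pi_B(B)$; composing with the bijections of Lemma~\ref{LemaRank} yields a unique bijection $g_0\colon A\to B$ with $F(\pi_A(a))=\pi_B(g_0(a))$, which I extend by freeness of $\M_A$ to a free-magma iso $g\colon \M_A\to \M_B$.

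The remaining and most delicate task is to verify the conjugation $g(\varphi(a))=\psi(g_0(a))$ in $\M_B$. By construction these two elements are congruent modulo $\boxminus(\psi)$, but upgrading this congruence to equality in the free magma $\M_B$ is the heart of the argument. My plan is to exploit a uniqueness property: $\varphi$ can be recovered from the pair $(\E(\varphi),\pi_A(A))$ by a \emph{decompose-and-stop} recipe in which one takes the decomposition $\pi_A(a)=m_1+m_2$ of a decomposable $\pi_A(a)$ in $\E(\varphi)$ and recursively expands each $m_i$ as a polynomial in $A$, stopping as soon as a component lies in $\pi_A(A)$ and writing the corresponding letter. Termination follows because $\pi_A(A)$ generates $\E(\varphi)$ and equidecomposability makes the decomposition tree unique, while the E-form condition $a\in g_{\M_A}(\varphi(a))$ guarantees that the letter $a$ actually occurs in the resulting polynomial. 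The recipe therefore returns $\varphi$ uniquely, and analogously $\psi$ is determined by $(\E(\psi),\pi_B(B))$. Since $F$ preserves decompositions and matches $\pi_A(A)$ with $\pi_B(B)$ through $g_0$, the two recipes correspond leaf by leaf, and relabelling leaves via $g_0$ yields $g(\varphi(a))=\psi(g_0(a))$ in $\M_B$. The main obstacle of the whole proof is exactly this last step: both the termination and uniqueness of the recipe rely crucially on combining equidecomposability with the E-form condition.
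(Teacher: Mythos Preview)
There is a genuine gap in your decompose-and-stop recipe, and it cannot be repaired because the $(\Rightarrow)$ direction as stated is actually false. Take $A=\{a,b\}$ with $\varphi(a)=a+a$ and $\varphi(b)=(a+a)+b$; both E-form conditions hold. Running your recipe on $\pi_A(b)$: the unique decomposition in $\E(\varphi)$ is $\pi_A(b)=\pi_A(a+a)+\pi_A(b)$, but $\pi_A(a+a)=\pi_A(\varphi(a))=\pi_A(a)\in\pi_A(A)$, so you stop immediately on the left branch and output $a+b$, not $\varphi(b)=(a+a)+b$. The E-form hypothesis $b\in g_{\M_A}(\varphi(b))$ guarantees only that $b$ occurs as a leaf of $\varphi(b)$; it does nothing to prevent some $\varphi(c)$ (here $\varphi(a)=a+a$) from appearing as a proper subterm of $\varphi(b)$, and whenever that happens your recipe halts prematurely and returns a different tree. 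So the pair $(\E(\varphi),\pi_A(A))$ does \emph{not} determine $\varphi$.

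This same example refutes the theorem outright. Set $\psi(a)=a+a$, $\psi(b)=a+b$, also in E-form. Then $\boxminus(\varphi)=\boxminus(\psi)$: from $(a+a,a)$ and $(b,b)$ one gets $((a+a)+b,a+b)$ by the submagma property, and transitivity with $(b,(a+a)+b)$ yields $(b,a+b)\in\boxminus(\varphi)$; the reverse inclusion is symmetric. Hence $\E(\varphi)=\E(\psi)$ literally as quotients. Yet the only free-magma automorphisms of $\M_{\{a,b\}}$ are induced by the identity and the transposition of $\{a,b\}$, and neither intertwines $\varphi$ with $\psi$, so $\varphi$ and $\psi$ are not conjugate. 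The paper's own proof conceals the same problem in the implication ``$f([\varphi(a)]_A)=[\psi(b)]_B \implies g(\varphi(a))=\psi(b)$'', which silently assumes $\pi_B$ is injective on all of $\M_B$; your more explicit argument makes the obstruction visible but cannot overcome it.
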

\begin{proof} 
$(\Rightarrow)$ Let $f: \E(\varphi) \longrightarrow \E(\psi)$ be the isomorphism. First we prove that there is an isomorphism of free magmas $g$ such that the diagram commutes:
\begin{align*}
 \xymatrix{\M_A \ar[d]_{\pi_A} \ar[r]^g & \M_B \ar[d]^{\pi_B}  \\
\E(\varphi) \ar[r]_f & \E(\psi)}
\end{align*} 
By Lemma~\ref{LemaRank}, $\pi_A: A \longrightarrow \pi(A)$ and $\pi_B: B\longrightarrow \pi(A)$ are bijective.  
This allows us to define a mapping $g:A \longrightarrow B$ as: 
\begin{align*}
g(a)= (\pi^{-1}_B \circ f \circ \pi_A)(a).
\end{align*} 
$g$ so defined over the generators defines a homomorphism $g:\M_A \longrightarrow \M_B$. $g$ clearly commutes in the diagram since we have for the generators that:
\begin{align*}
g(a)=b \iff f([a]_A)=[b]_B \iff f(\pi_A(a))=\pi_B(b)=\pi_B(g(a)), 
\end{align*} 
and, since $f,g,\pi_A,\pi_B$ are homomorphisms, this can be extended to any element $x\in \M_A$. Hence $f\circ \pi_A=\pi_B \circ$.
Since $g:A \longrightarrow B$ is bijective, $g:\M_A \longrightarrow \M_B$ is an isomorphism.  
Now we need to see that $\varphi$ and $\psi$ are conjugated. Since we deal with homomorphisms, we only need to see for the generators. We have:
\begin{align*}
&f([a]_A)=[b]_B \\
\implies &f([\varphi (a)]_A)=[\psi(b)]_B \\
\implies &g(\varphi (a))=\psi(b) \\
\implies &g(\varphi (a))=\psi(g(a)). 
\end{align*}

$(\Leftarrow)$ Let $g$ be the isomorphism of free magmas $g: \M_A \longrightarrow \M_B$, such that $g \circ \varphi=\psi \circ g$. We write $\big( g\times g \big)(x,y)=\big( g(x),g(y) \big)$. 
It is clear that if $\theta$ is a congruence on $\M_A$, then:
\begin{align*}
\faktor{\M_A}{\theta}\cong \faktor{g(\M_A)}{(g\times g)(\theta)}.
\end{align*}
Now we see that, considering $\varphi=\{ \big(a, \varphi(a)\big) \mid a\in A\}$ as a subset of $\M_A^2$, we have:
\begin{align*}
(g \times g)(\varphi) &=\{ \big(g(a), (g(\varphi(a)) \big) \mid a\in A\}\\
&=\{\big( g(a), \psi(g(a)) \big) \mid a\in A\}\\
&=\{(b, \psi(b)) \mid b\in B\}\\
&=\psi.
\end{align*}
Therefore, $(g \times g)\big(\boxminus(\varphi)\big)=\boxminus \big( (g\times g)(\varphi) \big)=\boxminus(\psi)$. And then:
\begin{align*}
\E(\varphi)&=\faktor{\M_A}{\boxminus(\varphi)} \cong \faktor{g(\M_A)}{(g\times g)\big(\boxminus(\varphi)\big)}=\faktor{\M_B}{\boxminus(\psi)}=\E(\psi). \qedhere
\end{align*}
\end{proof}

\begin{example} \label{ExampCyclic} Cyclic equidec magmas have at most one non-trivial relation, which allows to write them $\mathcal{C}_x= \E(1 \approx x)$, where $x\in \M$. The first cyclic equidec magmas are:
\begin{align*}
\mathcal{C}_1\cong \M, \,\,\, \mathcal{C}_{2}\cong \mathcal{I},\,\,\, \mathcal{C}_{3_-}\cong \langle 1\mid 1\approx 3_- \rangle,\,\,\, \mathcal{C}_{3_+}\cong \langle 1\mid 1\approx 3_+ \rangle, \ldots
\end{align*} 
where we recall Notation~\ref{NotationCardo} for the elements in $\M$. 
We have that: 	 
\begin{align*}
\mathcal{C}_x \cong \mathcal{C}_y \iff x=y.
\end{align*} 
Thus, for each $x\in \M$ we have a unique cyclic equidec magma.   
\end{example}

\begin{remark} \label{LastJonssonTarski}
Let $\mathbf{K}$ be a class of algebras. It is said that an algebra $A$ is $\mathbf{K}$-\emph{freely generated by the set} $X$ iff $X$ generates $A$ and if for every algebra $B \in \mathbf{K}$, every mapping $f:X\longrightarrow B$ can be extended to a homomorphism of algebras of $A$ into $B$.  
Let $\mathbf{K}_2$ be the class of J\'onsson-Tarski algebras. In \cite{Jonsson1961Properties} (Theorem~5) it is proved that if two J\'onsson-Tarski algebras are finitely $\mathbf{K}_2$-freely generated then they must be isomorphic. 

Notice that, in particular, the isomorphism of these algebras is preserved into magmas, that is, if $\widetilde{M} \cong \widetilde{N}$, then $M \cong N$. Notwithstanding, the examples of J\'onsson-Tarski algebras from a full and equidec magma in this article are not $\mathbf{K}_2$-freely generated, with the only exception of the Example~\ref{ExempNonEmagma}. 

Let us see, as illustration, that the J\'onsson-Tarski algebra $\widetilde{\Nat}_\diamond$  of Example~\ref{JonssonTarskiAlg} is not $\mathbf{K}_2$-freely generated. Every cyclic equidec magma is full with the only exception of $\mathcal{C}_1\cong \M$, recall Example~\ref{FreeFullEmagmas}. Consider the particular case of the magma $\mathcal{C}_{3_+}$. In this magma the equation $[1]=x+[1]$ has no solution, otherwise we would have two different relations in the magma $[1]=[1]+([1]+[1])$ and $[1]=x+[1]$, which makes no sense.
We saw that $\widetilde{\Nat}_\diamond$ is generated by the integer $2$ which satisfied the relation $2=1\diamond 2$. Now we consider a mapping $f: \{2\} \longrightarrow \widetilde{\mathcal{C}}_{3_+}$ defined as $f(2)=[1]$. We see that $f$ cannot be extended to a homomorphism $f: \widetilde{\Nat}_\diamond \longrightarrow \widetilde{\mathcal{C}}_{3_+}$, otherwise we would have $[1]=f(2)=f(1\diamond 2)=f(1)+f(2)=f(1)+[1]$, for some value $f([1]) \in \mathcal{C}_{3_+}$, and this is absurd. 

In the proof of Theorem~5 in \cite{Jonsson1961Properties}, J\'onsson and Tarski used a general form of the algebra $\widetilde{\Nat}_\diamond$ to prove that the class $\mathbf{K}_2$ contains non-trivial algebras, although the $\mathbf{K}_2$-freeness of that algebra is not required for the proof. 
By similar means one can prove that cyclic magmas and other full and equidec magmas seen in this article are not $\mathbf{K}_2$-freely generated. 
\end{remark}

\section*{Acknowledgment}
Many thanks to the anonymous reviewer for so exhaustive and accurate revision, specially for shortening the proof in the Example~\ref{ExampLan} and the reference of the article of J\'onsson and Tarski.



\begin{thebibliography}{99}
    
\bibitem{blondel1995UneFamille}
    Blondel, V.:
    Une famille d'op\'erations sur les arbres binaires.
    C. R. Acad. Sci. Paris
    \textbf{321}, 491--494 (1995)
    
\bibitem{blondel1994Properties}
    Blondel, V.:
    Structured numbers, properties of a hierarchy of operations on binary trees.
    Acta Inform.
    \textbf{35}, 1--15 (1998)

\bibitem{Bourbaki1989Algebra}
    Bourbaki, N.:
    Algebra I, Chapters 1-3.
    Springer-Verlag, Berlin (1989)

\bibitem{sankappanavar1981course}
    Burris, S., Sankappanavar, H.P.:
    A course in Universal Algebra. 
    The Millenium Edition (2012).
    \url{http://www.math.uwaterloo.ca/~snburris/htdocs/ualg.html}

\bibitem{cardo2019arithmetic}
    Card{\'o}, C.:
    Arithmetic and k-maximality of the cyclic free magma.
    Algebra Univ. 
    \textbf{80}, 35 (2019)

\bibitem{davey2002introduction}
    Davey, B.A., Priestley, H.A.:
    Introduction to Lattices and Order.
    Cambridge University Press, New York (2002)

\bibitem{enderton1977elements}
    Enderton, H.B.:
    Elements of set theory.
    Academic press, London (1977)

\bibitem{gratzer2008universal}
    Gr{\"a}tzer, G.:
    Universal Algebra.
    The university series in higher mathematics.
    D. Van Nostrand Co., Princeton (1968)

\bibitem{hopcroft1979introduction}
    Hopcroft, J.E., Raajeev, M., Ullman, J.D.:
    Introduction to Automata Theory, Languages, and Computation, 2nd edn. 
    Pearson education.
    Addison Wesley, Reading  (2001)

\bibitem{Jezek1983Medial}
    Je{\v{z}}ek, J. and Kepka, T.:
    Medial groupoids.
    Rozpravy {\v{C}}SAV, {\v{R}}ada Mat. a P{\v{r}}{\'\i}r,
    V{\v{e}}d, 93-2, Academia Praha (2019)
    
\bibitem{Jonsson1961Properties}
    J\'onsson, B. and Tarski, A.:
    On two properties of free algebras.
    Math. Scand.
    \textbf{9}, 95--101 (1961)
      
\bibitem{Reutenauer1993BookFreeLie}
    Reutenauer, C.:
    Free Lie Algebras.
    Clarendon Press, Oxford (1993)

\bibitem{Rosenfeld1968Algebraic}
    Rosenfeld, A.:
    An Introduction to Algebraic Structures.
    Holden-Day, San Francisco (1968)




\end{thebibliography}
\end{document}